\definecolor{plum}{rgb}{.5,0,1}
\theoremstyle{plain}
\newtheorem{theorem}{Theorem}
\newtheorem{lemma}{Lemma}
\newtheorem{prop}{Proposition}
\newtheorem{question}{Question}
\theoremstyle{definition}
\newtheorem{definition}{Definition}
\theoremstyle{remark}
\newtheorem*{remark}{Remark}
\newcommand{\dist}{\textup{dist}}
\newcommand{\R}{\mathbb{R}}
\newcommand{\B}{\mathbb{B}}
\newcommand{\T}{\mathbb{T}}
\newcommand{\td}{\tilde}
\newcommand{\e}{\epsilon}
\newcommand{\Tmax}{\T_\mathrm{max}}
\newcommand{\eps}{\epsilon}
\newcommand{\leapp}{\lessapprox}
\newcommand{\geapp}{\gtrapprox}
\newcommand{\lesim}{\lesssim}
\newcommand{\gesim}{\gtrsim}
\newcommand{\tde}{\tilde{\delta}}
\newcommand{\ts}{\tilde{s}}
\newcommand{\tT}{\tilde{\T}}
\newcommand{\tM}{\tilde{M}}
\newcommand{\tN}{\tilde{N}}
\newcommand{\tR}{\tilde{r}}
\newcommand{\tW}{\tilde{W}}
\newcommand{\de}{\delta}
\newcommand{\cP}{\mathcal{P}}
\newcommand{\cL}{\mathcal{L}}
\newcommand{\cQ}{\mathcal Q}
\newcommand{\wt}{\widetilde}
\newcommand{\U}{\mathbb{U}}
\title[An incidence estimate and a Furstenberg type estimate for tubes]{An incidence estimate and a Furstenberg type estimate for tubes in $\mathbb{R}^2$}
\date{}
\author{Yuqiu Fu}
\address{Department of Mathematics, MIT,
Cambridge, MA 02139}
\email{yuqiufu@mit.edu}
\author{Shengwen Gan}
\address{Department of Mathematics, MIT,
Cambridge, MA 02139}
\email{shengwen@mit.edu}
\author{Kevin Ren}
\address{Department of Mathematics, MIT,
Cambridge, MA 02139}
\email{kevinren@mit.edu}
\subjclass[2020]{28A75, 42B10}
\begin{document}
\maketitle

\begin{abstract}

We study the $\delta$-discretized Szemer\'edi-Trotter theorem and Furstenberg set problem.
We prove sharp estimates for both two problems assuming tubes satisfy some spacing condition. For both problems, we construct sharp examples  that share common features.
\end{abstract}

\smallskip
\noindent \textbf{Keywords.} Incidence estimate, Furstenberg conjecture

\section{Introduction}

\subsection{Incidence estimate}
To begin with, let us first recall the famous Szemer\'edi-Trotter theorem in incidence geometry. Suppose $\cL$ is a set of lines in the plane. For $r\ge 2$, let $P_r(\cL)$ denote the $r$-rich points of $\cL$ --- the set of points that lie in at least $r$ lines of $\cL$. The Szemer\'edi-Trotter theorem gives sharp bounds for $|P_r(\cL)|$:
$$ |P_r(\cL)|\lesim \frac{|\cL|^3}{r^3}+\frac{|\cL|}{r}. $$
There is also a dual version. Suppose $\cP$ is a set of points in the plane. For $r\ge 2$, let $L_r(\cP)$ denote the $r$-rich lines of $\cP$ --- the set of lines that contain at least $r$ points of $\cP$. We have:
$$ |L_r(\cP)|\lesim \frac{|\cP|^3}{r^3}+\frac{|\cP|}{r}. $$

A natural question is to replace the points by $\de$-balls (the balls of radius $\de$) and the lines by the $\de$-tubes (the tubes of dimensions $1\times\de$), and then ask the incidence estimate between these $\de$-balls and $\de$-tubes.

This question is considered in \cite{guth2019incidence}, assuming some spacing conditions on tubes. 
In our paper, we generalize the incidence estimates on the plane in \cite{guth2019incidence}. We will consider some more general spacing conditions.

To state our results, we need the following notions.

\begin{definition}[Essentially distinct balls and tubes]\label{distinct}
For a set of $\delta$-balls $\B$, we say these balls are essentially distinct if for any $B_1\neq B_2\in\B$, $m(B_1\cap B_2)\leq (1/2)m(B_1)$. Similarly, for a set of $\delta$-tubes $\T$, we say these tubes are essentially distinct if for any $T_1\neq T_2\in\T$, $m(T_1\cap T_2)\leq (1/2)m(T_1)$. Here $m(X)$ stands for the Lebesgue measure of set $X$.
\end{definition}

In the rest of the paper, we will always consider essentially distinct $\delta$-balls and essentially distinct $\delta$-tubes.

In the discrete case, it's easy to define the incidence between points and lines, and to define the $r$-rich points and $r$-rich lines. Here we make analogies of these notions for $\de$-balls and $\de$-tubes.

\begin{definition}[$r$-rich balls and $r$-rich tubes]\label{rich}
Given a set of $\delta$-tubes $\mathbb{T}$, we define the $r$-rich balls for $\T$ in the following way. We choose a set $\B$ to be a maximal set of essentially distinct $\delta$-balls.
We define
$$B_r(\mathbb{T}):=\{ B\in\B: B\ \text{intersects more than}\ r\  \text{tubes from}\ \T \}.$$
We say $B_r(\T)$ is the set of $r$-rich $\delta$-balls for $\T$. 
Here we have many choices for $\B$, but we will see in the proof that the choice of $\B$ doesn't affect the result for the upper bound of $|B_r(\T)|$.
We could just choose $\B$ to be all $\delta$-balls centered at $(\delta/2) \mathbb{Z}^2$).

Similarly, given a set of $\delta$-balls $\B$, we define the $r$-rich tubes for $\B$ in the following way. We choose a set $\T$ to be a maximal set of essentially distinct $\delta$-tubes. We define
$$T_r(\B):=\{ T\in\T: T\ \text{intersects more than}\ r\ \text{balls from}\ \B \}.$$ 
We say $T_r(\B)$ is the set of $r$-rich $\delta$-tubes for $\B$. 
\end{definition}


Now we state our main results.

\begin{theorem}\label{main2}
Let $1 \le W \le X \le \delta^{-1}$. Let $\T$ be a collection of essentially distinct $\delta$-tubes in $[0,1]^2$. We also assume $\T$ satisfies the following spacing condition: every $W^{-1}$-tube contains at most $\frac{X}{W}$ many tubes of $\T$, and the directions of these tubes are $\frac{1}{X}$-separated. 

We denote $|\T_{\max}|:=WX$ (as one can see that $\T$ contains at most $\sim WX$ tubes).
Then for $r > \max(\delta^{1-2\e} |\Tmax|, 1)$, the number of $r$-rich balls is bounded by
\begin{equation}\label{eq1}
    |B_r (\T)| \lesim_\e \delta^{-\e} |\T| |\Tmax| \cdot r^{-2} (r^{-1} + W^{-1}).
\end{equation}
\end{theorem}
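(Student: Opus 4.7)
The plan is a two-scale Szemer\'edi-Trotter argument. The target bound decomposes as
\[ |\T||\Tmax| r^{-2} (r^{-1} + W^{-1}) = |\T||\Tmax| r^{-3} + |\T||\Tmax| r^{-2} W^{-1}. \]
The first summand is essentially the classical $\delta$-discretized Szemer\'edi-Trotter contribution: since $|\T| \le |\Tmax|$, the usual ST first term $|\T|^2 r^{-3}$ is bounded by $|\T||\Tmax| r^{-3}$. The new content is the second summand, which improves the Kakeya-type term $|\T|/r$ from standard ST to $|\T|X/r^2$, a genuine improvement precisely when $r > X$ that must exploit the spacing condition.

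First I would cover $\T$ by essentially distinct $W^{-1}$-tubes $\tilde\T$, so that each $\tilde T \in \tilde\T$ contains at most $X/W$ tubes of $\T$ and $|\tilde\T| \le \min(W^2, |\T|W/X)$. The key geometric observation is that if $B$ is $r$-rich for $\T$, then the containing $W^{-1}$-ball $\tilde B$ is $(rW/X)$-rich for $\tilde\T$, because the $r$ tubes through $B$ come from at least $rW/X$ distinct coarse tubes (each contributing $\le X/W$ thin tubes). Apply the $\delta$-discretized Szemer\'edi-Trotter at scale $W^{-1}$ to the pair $(\tilde\T, \tilde B)$ to count coarse $(rW/X)$-rich balls.

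Inside each coarse rich $\tilde B$, the total number of $\delta$-tubes of $\T$ through $\tilde B$ is $n(\tilde B) = k(\tilde B) X/W \le X$, where $k(\tilde B)$ denotes the coarse richness. After rescaling $\tilde B$ to the unit ball, apply Szemer\'edi-Trotter at scale $\delta W$ to the resulting $(\delta W)$-tubes to bound the $r$-rich $\delta$-balls inside. Summing the ``first-term contribution'' over coarse rich $\tilde B$, using $n \le X$ pointwise and $\sum_{\tilde B} n(\tilde B) \le |\T| W$ (each thin tube is in $\sim W$ coarse balls), yields
\[ \sum_{\tilde B} \frac{n(\tilde B)^2}{r^3} \le \frac{X \cdot |\T| W}{r^3} = \frac{|\T||\Tmax|}{r^3}. \]
For the ``second-term contribution'' $\sum_{\tilde B} n(\tilde B)/r$, combine the outer $W^{-1}$-scale Szemer\'edi-Trotter bound on the number of coarse rich balls, the pointwise bound $n \le X$, and the bound $|\tilde\T| \le |\T|W/X$. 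In the regime where the outer second ST term dominates the coarse count, this yields exactly $|\T|X/r^2 = |\T||\Tmax|/(r^2W)$.

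The main obstacle is the tight handling of $\sum n/r$ in all parameter regimes: one must case-split on which term of the outer Szemer\'edi-Trotter dominates the count of coarse rich balls, and in the ``first-term dominates'' case use a dyadic pigeonhole on the coarse richness $k(\tilde B)$ combined with a refined inner estimate to absorb the excess contribution into either $|\T||\Tmax|/r^3$ or $|\T|X/r^2$. The hypothesis $r > \delta^{1-2\e}|\Tmax|$ is used to rule out degenerate regimes and to absorb the various $\delta^{-\e}$ losses (one from each application of Szemer\'edi-Trotter at the two scales, contributing $(W^{-1})^{-\e}\cdot (\delta W)^{-\e} = \delta^{-\e}$) into a single $\delta^{-\e}$ factor.
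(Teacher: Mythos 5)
Your plan has a few genuine gaps, and I believe the central one is fatal for this route. First, the bound $|\tilde\T| \le |\T|W/X$ is backwards: since each $W^{-1}$-tube contains at most $X/W$ thin tubes, you only get $|\tilde\T| \ge |\T|W/X$, and in general $|\tilde\T|$ can be as large as $\min(|\T|,W^2)$. Second, and more seriously, the inner Szemer\'edi--Trotter bound $\#B_r \lesssim n^2/r^3 + n/r$ that you invoke for the rescaled $\delta W$-tubes inside a coarse ball is false for essentially distinct $\delta$-tubes without a further hypothesis: a bush of $n$ direction-separated $\delta$-tubes through one $\delta$-ball has $\sim n^2\delta^2 r^{-2}$ $r$-rich balls, which for $n\sim\delta^{-1}$ and $1\ll r\ll\delta^{-1}$ exceeds both $n^2/r^3$ and $n/r$. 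The correct ``second term'' for direction-separated $\delta'$-tubes is of the form $n\cdot|\T'_{\max}|/r^2$ --- i.e.\ it is the $W'=1$ case of the very theorem you are trying to prove. If you substitute that recursive estimate, the second-term contribution becomes $\sum_{\tilde B} n(\tilde B)X/r^2 \le (X/r^2)\sum_{\tilde B} n(\tilde B) \lesssim |\T|XW/r^2 = |\T||\T_{\max}|/r^2$, which overshoots the target $|\T||\T_{\max}|/(r^2W)$ by a full factor of $W$. The two incorrect ingredients (the reversed bound on $|\tilde\T|$ and the spurious $n/r$ inner term) happen to cancel in the single sub-case you computed, which is why the final numbers look right, but neither is true. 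The case ``first term dominates the outer count'' --- where one really has to see the spacing condition do work --- is left as a sketch, and a direct dyadic pigeonhole over the coarse richness does not appear to close.

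For comparison, the paper does not attempt a single two-scale decomposition at the fixed scale $W^{-1}$. It first dualizes (Section \ref{dualitysec}), turning the spacing condition on tubes into a lattice-type condition on $\delta$-balls and reducing Theorem \ref{main2} to Theorem \ref{main}. It then proves Theorem \ref{main} by induction on $\delta$ and $r$, decomposing into cells at the small scale $D=\delta^{-\e^4}$, pigeonholing to identify heavy $D^{-1}$-segments and heavy thickened tubes, and closing the induction using a broad--narrow dichotomy (Proposition \ref{twopointone}/Lemma \ref{cor21}, after \cite{guth2019incidence}) together with an endpoint estimate at $X\sim\delta^{-1}$ (Proposition \ref{thm5.4dual}, after \cite{demeter2020small}). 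The induction on scale is essential precisely because the incidence estimate you want to apply ``inside a coarse ball'' is as strong as the statement being proved.
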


\begin{remark}
If we take $X=W$ in the above theorem and note that in this case  $|B_r(\T)|=0$ for $r>W$, we recover Theorem 1.1 in \cite{guth2019incidence}.
If we take $X=\delta^{-1}$ in the above theorem and note that in this case  $r>\de WX=W$, we recover Theorem 1.2 (with $N_1=1$) in \cite{guth2019incidence}.
There are two new ingredients in our theorem. First, we use $|\T||\T_{\max}|$ as our upper bound in \eqref{eq1}, whereas in \cite{guth2019incidence} it was $|\T_{\max}|^2$. Second,  
our theorem also concerns about the intermediate spacing conditions, i.e. we introduce a new parameter $X$.
\end{remark}

There is another version of Theorem \ref{main2}. To motivate our idea, we introduce two notions: direction and position. Any $\de$-tube contained in $[0,1]^2$ that forms an angle $\ge\frac{\pi}{4}$ with the $x$-axis is determined by its direction, as well as its intersection with $x$-axis (which we call its position). Switching the role of direction and position gives us the following theorem.

\begin{theorem}\label{main3}
Fix a line $\ell$ that intersects $[0, 1]^2$. Let $1 \le W \le X \le \delta^{-1}$. Let $\T$ be a collection of essentially distinct $\delta$-tubes in $[0,1]^2$, such that every tube in $\T$ forms an angle $\ge \frac{\pi}{4}$ with $\ell$. We also assume $\T$ satisfies the following spacing condition: every $W^{-1}$-tube which form an angle $\ge \frac{\pi}{4}$ with $\ell$ contains at most $\frac{X}{W}$ many tubes of $\T$, and the intersections of these tubes with $\ell$ are $\frac{1}{X}$-separated.

We denote $|\T_{\max}|:=WX$ (as one can see that $\T$ contains at most $\sim WX$ tubes).
Then for $r > \max(\delta^{1-2\e} |\Tmax|, 1)$, the number of $r$-rich balls is bounded by 
\begin{equation*}
    |B_r (\T)| \lesim_\e \delta^{-\e} |\T| |\Tmax| \cdot r^{-2} (r^{-1} + W^{-1}).
\end{equation*}
\end{theorem}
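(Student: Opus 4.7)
The plan is to deduce Theorem~\ref{main3} from Theorem~\ref{main2} by a projective change of coordinates that exchanges the two parameters naturally labeling a $\delta$-tube meeting $\ell$ transversally --- its direction and its intersection with $\ell$. Since every such tube $T$ is uniquely determined by the pair $(\theta,x_0)$ with $\theta$ the angle with $\ell$ and $x_0=T\cap\ell$, both hypotheses can be phrased as: inside every $W^{-1}\times W^{-1}$ box in the $(\theta,x_0)$ parameter space, one coordinate ($\theta$ for Theorem~\ref{main2}, $x_0$ for Theorem~\ref{main3}) takes on a $1/X$-separated set of values. The two statements are thus formally symmetric, and the reduction amounts to realizing the swap $(\theta,x_0)\leftrightarrow(x_0,\theta)$ by a map of the plane.

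After a rotation and a translation along $\ell$ (the latter chosen so that $x_0$ is uniformly bounded away from $0$ for all tubes), I may take $\ell=\{y=0\}$ with $x_0\in[c_1,c_2]$, $c_1>0$. Consider the projective map $\psi(x,y)=(x/y,-1/y)$. A direct computation shows that $\psi$ sends the line $y=m(x-x_0)$ to the line in $(u,v)$-coordinates of slope $-1/x_0$ and $v$-intercept $1/(mx_0)$. Crucially, the image slope depends only on $x_0$, and $x_0\mapsto -1/x_0$ is bi-Lipschitz on $[c_1,c_2]$, so a $1/X$-separated family of intercepts becomes a $\sim 1/X$-separated family of image directions: exactly the hypothesis of Theorem~\ref{main2}.

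The main difficulty is the singularity of $\psi$ along $\ell$, since $\ell\cap[0,1]^2\neq\emptyset$. To localize, I would decompose $[0,1]^2$ into dyadic bands $B_k=\{(x,y):2^{-k-1}\le|y|\le 2^{-k}\}$ for $0\le k\lesim\log(1/\delta)$. On $B_k$, $d\psi$ has singular values comparable to $2^k$ and $2^{2k}$, so after post-composing with a rescaling that returns the image of $B_k$ to unit scale one obtains a new tube family of effective width $\delta_k\sim\delta\cdot 2^k$ in a unit square, satisfying the Theorem~\ref{main2} hypotheses with the same $W$ and $X$ up to absolute constants (each tube of $\T$ meets $O(1)$ such bands, since tubes make angle $\ge\pi/4$ with $\ell$). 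Applying Theorem~\ref{main2} in each band and summing over $k$ produces the bound in~\eqref{eq1}, the logarithmic factor being absorbed by $\delta^{-\e}$.

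The main obstacle is accordingly bookkeeping: $\delta$-balls do not go to $\delta$-balls and $\delta$-tubes do not go to $\delta$-tubes under $\psi$, but to ellipsoids and slightly curved strips whose eccentricities are controlled by the Lipschitz constants above. One must verify that an $r$-rich $\delta$-ball for the original family inside $B_k$ pulls back from an $\Omega(r)$-rich $\delta_k$-ball for the transformed family, that the $W^{-1}$-tube spacing hypothesis survives up to constants, and that the threshold condition $r>\delta^{1-2\e}|\T_{\max}|$ in Theorem~\ref{main2} is satisfied in each band (with parameters $\delta_k,W,X$) whenever it holds for the original data. Once these routine but slightly fiddly transfers are carried out, Theorem~\ref{main3} follows as a corollary.
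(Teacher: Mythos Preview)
Your approach is different from the paper's and contains a real gap, not just bookkeeping.

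The paper does not reduce Theorem~\ref{main3} to Theorem~\ref{main2}. It reduces both to Theorem~\ref{main} via the linear point--line duality of Section~\ref{dualitysec}: a $\delta$-tube with $\ell$-intercept $u_0$ and inverse slope $v_0$ is sent to the $\delta$-ball at $(u_0,v_0)$ in the dual square, and a $\delta$-ball at $(x_0,y_0)$ is sent to the $\delta$-tube around $u=x_0-vy_0$. This correspondence is affine, has no singularity, preserves incidences up to constants, and carries the Theorem~\ref{main3} spacing hypothesis directly to the grid condition of Theorem~\ref{main} (one ball per $X^{-1}\times W^{-1}$ rectangle). The reduction is a few lines.

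Your projective map $\psi$ does exchange intercept and direction, but any spatial map doing this must blow up along~$\ell$: lines through a common point of~$\ell$ (fixed $x_0$, varying $\theta$) are supposed to become parallel lines (fixed direction, varying intercept), so that point is forced to infinity. Since $\ell$ meets $[0,1]^2$, the singularity lies inside the region of interest, and the dyadic band decomposition does not repair it cleanly. Two concrete issues:
\begin{itemize}
    \item The claim that each tube of $\T$ meets $O(1)$ bands is false. The hypothesis requires every $T\in\T$ to intersect $\ell$, so every tube crosses $\{y=0\}$ and meets all bands $B_k$ for $0\le k\lesssim\log(1/\delta)$. (What you actually need---that each $r$-rich \emph{ball} lies in $O(1)$ bands---is true.)
    \item On $B_k$ the singular values of $d\psi$ are $\sim 2^k$ and $\sim 2^{2k}$. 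After any isotropic rescaling of $\psi(B_k)$ to unit size, a computation shows $\delta$-tubes acquire width $\sim\delta$ while $\delta$-balls become ellipses of dimensions $\sim\delta\times\delta 2^k$; there is no single scale $\delta_k$ at which both are round. If you thicken to $\delta_k=\delta 2^k$ so that balls fit, the threshold in Theorem~\ref{main2} becomes $r>(\delta 2^k)^{1-2\e}WX$, which is strictly stronger than the assumed $r>\delta^{1-2\e}WX$ and fails already for $k\ge 1$; for the deepest relevant bands it demands roughly $r>\delta^{1/2}WX$. If instead you keep $\delta_k=\delta$, each ellipse must be covered by $\sim 2^k$ auxiliary $\delta$-balls, and controlling overlaps of these auxiliary balls across different ellipses is a new incidence problem, not bookkeeping.
\end{itemize}
So the verifications you describe as ``routine but slightly fiddly'' do not in fact go through; the paper's linear duality sidesteps all of this.
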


\begin{remark}
The above two theorems give upper bounds for the number of $r$-rich balls $B_r(\T)$ when tubes $\T$ satisfy some spacing condition. We can actually switch the roles of balls and tubes, so the question becomes to estimate the number of $r$-rich tubes $T_r(\B)$ assuming some spacing condition on $\B$. This is our Theorem \ref{main} stated below. In Section \ref{dualitysec}, we will discuss a tube-ball duality and show that Theorem \ref{main} implies Theorem \ref{main2} and Theorem \ref{main3}. 
\end{remark}

\begin{theorem}\label{main}
Let $1 \le W \le X \le \delta^{-1}$. Divide $[0, 1]^2$ into $W^{-1} \times X^{-1}$ rectangles as in Figure \ref{fig:generalcase}. Let $\B$ be a set of $\delta$-balls with at most one ball in each rectangle. 

We denote $|\B_{\max}|:=WX$ (as one can see that $\B$ contains at most $\sim WX$ balls).
Then for $r > \max(\delta^{1-2\e} |\B_{\max}|, 1)$, the number of $r$-rich tubes is bounded by
\begin{equation*}
    |T_r (\mathbb{B})| \lesim_\e \delta^{-\e} |\B| |\B_{\max}| \cdot r^{-2} (r^{-1} + W^{-1}).
\end{equation*}
\end{theorem}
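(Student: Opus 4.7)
My plan is to reduce to a discretized Szemer\'edi--Trotter bound after using the spacing condition to cap the richness of any single tube. First, I observe that in the relevant regime $\delta \le 1/(WX) \le \min(W^{-1}, X^{-1})$, any $\delta$-tube $T \subset [0,1]^2$ meets only $O(W+X)$ of the $W^{-1}\times X^{-1}$ rectangles in the partition: a unit-length segment crosses at most $O(W)$ vertical grid lines (spaced $W^{-1}$ apart) and $O(X)$ horizontal grid lines (spaced $X^{-1}$ apart), and the tube's width $\delta$ adds only $O(1)$ rectangles per crossing. Since $\B$ contains at most one ball per rectangle, every tube meets at most $O(W+X) = O(X)$ balls of $\B$, hence $T_r(\B) = \varnothing$ once $r \ge C_0 X$ for some absolute constant $C_0$. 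So I may assume $r \le C_0 X$ throughout.

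Next, I would apply a Szemer\'edi--Trotter-type inequality for $\delta$-balls and essentially distinct $\delta$-tubes, namely
$$|T_r(\B)| \lesim_\e \delta^{-\e}\Bigl(\frac{|\B|^2}{r^3} + \frac{|\B|}{r}\Bigr).$$
Viewing balls as points and essentially distinct tubes as pseudo-lines, this would follow from classical ST, except that near-parallel tube pairs can share many balls along a common strip, violating the pseudo-line axioms. My strategy to handle this is to dyadically decompose tubes by direction scale, apply classical ST within each angular class (where essential distinctness forces well-spaced directions), and sum. The $\delta^{-\e}$ loss emerges from the $O(\log(1/\delta))$ dyadic scales, and the hypothesis $r > \delta^{1-2\e}|\B_{\max}|$ is the quantitative statement that keeps the pigeonhole at each scale from degenerating.

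Finally, I combine the two inputs. Substituting $|\B| \le |\B_{\max}|$ bounds the first ST term by $|\B||\B_{\max}|/r^3$, and substituting $r \le C_0 X = C_0|\B_{\max}|/W$ into the second term gives $|\B|/r \lesim |\B||\B_{\max}|/(Wr^2)$. Adding the two yields the desired bound
$$|T_r(\B)| \lesim_\e \delta^{-\e}|\B||\B_{\max}|\, r^{-2}(r^{-1}+W^{-1}).$$

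The hard part will be the second step: extracting the clean discretized ST inequality for $\delta$-tubes with the stated $\delta^{-\e}$ loss. The difficulty is precisely that many essentially distinct $\delta$-tubes may be nearly parallel, so classical ST does not directly apply; the spacing of $\B$ combined with a multi-scale angular decomposition is where most of the technical work has to go, and this is also the step that forces the lower bound on $r$ in the hypothesis.
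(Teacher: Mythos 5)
There is a genuine gap, and it is not merely the technical difficulty you flag at the end: the intermediate inequality your whole plan routes through, $|T_r(\B)| \lesssim_\e \delta^{-\e}\bigl(|\B|^2 r^{-3} + |\B| r^{-1}\bigr)$, is false under exactly the hypotheses of Theorem \ref{main}, so no amount of work on the angular decomposition can establish it. Take $W=1$, $X=\delta^{-1}$, so the rectangles are the horizontal $1\times\delta$ strips, and let $\B$ be a column of $\sim\delta^{-1}$ balls, $\delta$-separated along a vertical unit segment (one ball per strip, so the spacing condition holds, and $|\B|\sim|\B_{\max}|\sim\delta^{-1}$). A $\delta$-tube meeting this segment at angle $\theta$ contains $\sim\theta^{-1}$ of the balls, so for $r=\delta^{-1/2}$ (which satisfies $r>\delta^{1-2\e}|\B_{\max}|=\delta^{-2\e}$) the $r$-rich tubes are those at angle $\lesssim 1/r$ to the segment, and a direct count gives $|T_r(\B)|\sim \delta^{-2}r^{-2}=\delta^{-1}$. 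Your claimed bound would give $\lesssim\delta^{-\e}(\delta^{-2}r^{-3}+\delta^{-1}r^{-1})\sim\delta^{-1/2-\e}$, a polynomial discrepancy. This is precisely the (dual of the) bush example in Section \ref{egincidence}, Case 2: in the regime $W\ll r\ll X$ the true answer is $\sim|\B||\B_{\max}|W^{-1}r^{-2}$, which is strictly larger than both $|\B|^2r^{-3}$ and $|\B|r^{-1}$; the $W^{-1}$ term in the theorem is not a crutch to absorb $|\B|/r$ via $r\lesssim X$, it is the sharp main term in that range. (Your first step, capping $r\lesssim X$ because a tube meets $O(W+X)$ rectangles, is fine, though the side remark ``$\delta\le 1/(WX)$'' is not implied by the hypotheses; only $WX\lesssim\delta^{-2+2\e}$ is.)

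The proposed repair mechanism also does not work on its own terms: within a fixed dyadic angular class, essentially distinct tubes are not pseudo-lines --- two parallel tubes offset by $\delta$, or tubes at angle $\theta$ sharing a $\delta/\theta$-long intersection, can share many balls --- and essential distinctness does \emph{not} force well-separated directions, so classical Szemer\'edi--Trotter cannot be invoked per class. This is why the paper does not pass through any global ST-shaped bound: its proof is a two-scale induction on $\delta$ and $r$, using the cell/segment decomposition of Proposition \ref{twopointone} through Lemma \ref{cor21} for the inductive step, a double-counting argument for small $r$ or small $W$, and Lemma \ref{highx} (the dual of Theorem 5.4 in \cite{demeter2020small}, itself proved by induction) for the base case $X\ge\delta^{-1+\e/2}$, which is exactly the regime your counterexample lives in. To fix your argument you would have to replace the claimed inequality by the statement of Theorem \ref{main} itself (with the $W^{-1}$ term), at which point the reduction in your third step becomes circular.
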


\begin{remark}
There are two special cases of Theorem \ref{main}: $X=W$, $X=\de^{-1}$ (see Figure \ref{fig:specialcases}). These two cases actually correspond to (the dual version of) Theorem 1.1 and Theorem 1.2 in \cite{guth2019incidence}.
\end{remark}

\begin{figure}
    \centering
    \includegraphics{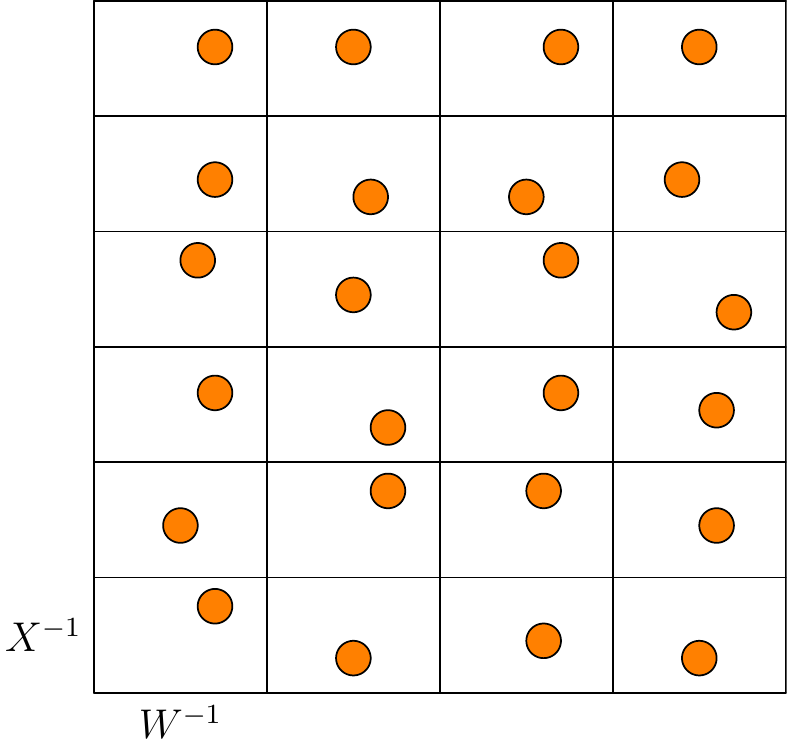}
    \caption{The general case of Theorem \ref{main}.}
    \label{fig:generalcase}
\end{figure}

\begin{figure}
    \centering
\includegraphics{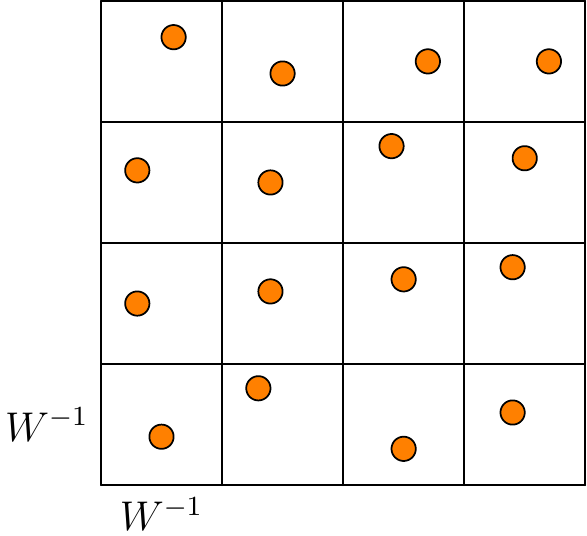}
\includegraphics{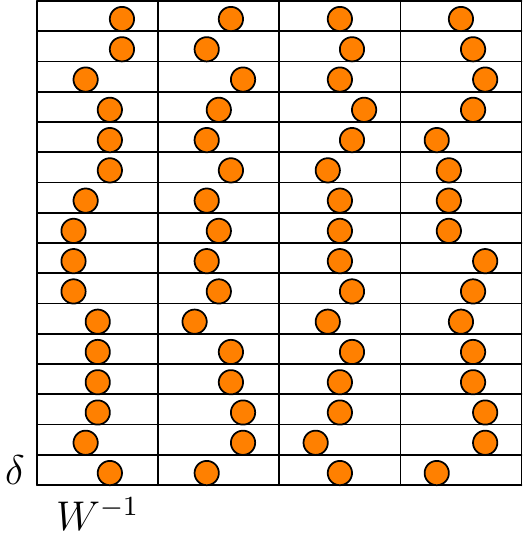}
    \caption{Special cases of Theorem \ref{main}.}
    \label{fig:specialcases}
\end{figure}

\subsection{Furstenberg set problem}
Wolff discussed the Furstenberg set problem in \cite{wolff1999recent}. Given $\alpha\in (0,1)$, we say a set $E\subset \R^2$ is an $\alpha$-Furstenberg set if for each direction $e\in S^1$, there exits a line $l_e$ pointing in direction $e$ such that $\dim_{\textup{H}}(E\cap l_e)\ge \alpha$. The problem is to find the lower bound of $\dim_{\textup{H}}E$. Wolff proved that $\dim_{\textup{H}}E\ge \max(\frac{1}{2}+\alpha, 2\alpha)$ and conjectured that $\dim_{\textup{H}}E\ge \frac{3}{2}\alpha+\frac{1}{2}$.

Some progress has been made on this problem. In \cite{katz2001some}, Katz and Tao showed that when $\alpha=\frac{1}{2}$, the Furstenberg problem is related to other two problems: Falconer distance problem and Erd\"os ring problem. Later, Bourgain \cite{bourgain2003erdHos} improved the bound to $\dim_{\textup{H}}E\ge 1+\e$ when $\alpha=\frac{1}{2}$. Recently, Orponen and Shmerkin \cite{orponen2021hausdorff} further improved the bound to $\dim_{\textup{H}}E\ge 2\alpha+\e$ for $\alpha\in(\frac{1}{2},1)$.

A general Furstenberg set problem was also considered by many authors, for example in \cite{molter2012furstenberg}, \cite{hera2020improved}, \cite{orponen2021hausdorff}. For $(u_0,v_0)\in[0,1]^2$, define the line $l(u_0,v_0): v_0y=x-u_0$.  We say $E\subset \R^2$ is a $(\alpha,\beta)$-Furstenberg set, if there exists an $\beta$-dimensional set $X\in [0,1]^2$ such that for each line $l\in\{ l(u,v):(u,v)\in X \}$, we have $\dim_{\text{H}}(l\cap E)\ge \alpha$. The problem is to find the lower bound of $\dim_{\textup{H}}E$.

There are also some variants of the Furstenberg problem. In \cite{zhang2017polynomials}, Zhang considered the discrete Furstenberg problem and proved the sharp estimates.

In our paper we consider the $\de$-discretized version assuming an evenly spacing condition. We consider the following question.

\begin{question}\label{question}
Fix $\alpha\in (0,1)$. Let $\T=\{T\}$ be a set of $\de$-tubes that are $\de$-separated in direction, and with cardinality $\sim \de^{-1}$. Assume for each $T$ there is a set of $\de$-balls $Y(T)=\{B_\de\}$ satisfying: each ball in $Y(T)$ intersects $T$; $\#Y(T)\sim \de^{-\alpha}$ and the balls in $Y(T)$ have spacing $\gtrsim \de^{\alpha}$. 

If we define the union of these $\de$-balls to be $\B=\cup_T Y(T)$, can we show
$$ |\B| \gtrapprox \de^{-\frac{3}{2}\alpha-\frac{1}{2}}? $$
\end{question}

We will give an affirmative answer to this question in Section \ref{fursec}. Actually, we will prove a more general result as follows, which could be thought of as the evenly spaced $(\alpha,\beta)$-Furstenberg problem.

\begin{theorem}[Evenly spaced Furstenberg]\label{thmfur}
Let $1 \le W \le X \le \delta^{-1}$. Let $\T$ be a collection of essentially distinct $\delta$-tubes in $[0,1]^2$ that satisfies the following spacing condition: every $W^{-1}$-tube contains at most $\frac{X}{W}$ many tubes of $\T$, and the directions of these tubes are $\frac{1}{X}$-separated. We also assume $|\T|\sim XW$.

Assume for each $T$ there is a set of $\de$-balls $Y(T)=\{B_\de\}$ satisfying: each ball in $Y(T)$ intersects $T$; $\#Y(T)\sim \de^{-\alpha}$ and the balls in $Y(T)$ have spacing $\gtrsim \de^{\alpha}$. 
Define the union of these $\de$-balls to be $\B=\cup_T Y(T)$.
Then we have the estimate
\begin{equation}\label{givenest}
    |\B|\gtrsim (\log\de^{-1})^{3.5}\min(\de^{-\alpha-1},\de^{-\frac{3}{2}\alpha}(XW)^{\frac{1}{2}},\de^{-\alpha}XW).
\end{equation}
\end{theorem}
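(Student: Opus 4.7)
The plan is to combine a trivial incidence lower bound with the $r$-rich ball bound from Theorem \ref{main2}, mediated by a dyadic pigeonhole on ball multiplicities.

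For $B \in \B$ set $\mu(B) = \#\{T \in \T : B \cap T \neq \emptyset\}$. Since $B \in Y(T)$ forces $B \cap T \neq \emptyset$, the hypotheses give
\begin{equation*}
\sum_{B \in \B}\mu(B) \;\geq\; \sum_{T \in \T}|Y(T)| \;\gtrsim\; |\T|\,\delta^{-\alpha} \;\sim\; WX\delta^{-\alpha}.
\end{equation*}
Dyadically partition $\B$ into level sets $\B_r = \{B : \mu(B) \sim r\}$ for $r$ ranging over the $O(\log \delta^{-1})$ dyadic values in $[1, WX]$. Pigeonholing yields some $r$ with $|\B_r| \gtrsim WX\delta^{-\alpha}/(r \log \delta^{-1})$, and since $|\B| \ge |\B_r|$ it suffices to lower-bound this in each regime of $r$.

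I would split into four regimes. (i) If $r \lesssim 1$, the pigeonhole bound directly gives $|\B| \gtrsim WX\delta^{-\alpha}/\log\delta^{-1}$, matching the third term $\delta^{-\alpha}WX$ of \eqref{givenest}. (ii) If $1 \ll r \le \delta^{1-2\eps}WX$, Theorem \ref{main2} does not apply, but substituting the upper bound on $r$ into the pigeonhole estimate yields $|\B| \gtrsim \delta^{-\alpha-1+2\eps}/\log\delta^{-1}$, matching the first term. (iii) If $\delta^{1-2\eps}WX < r \le W$, then after replacing each $B \in \B_r$ by a nearby lattice ball (at constant cost to the multiplicity), $\B_r \subseteq B_{r/2}(\T)$, and Theorem \ref{main2} gives $|\B_r| \lesssim \delta^{-\eps}(WX)^2 r^{-3}$; comparing with the pigeonhole bound forces $r \lesssim (WX)^{1/2}\delta^{(\alpha-\eps)/2}(\log\delta^{-1})^{1/2}$, hence $|\B| \gtrsim \delta^{-3\alpha/2+\eps/2}(WX)^{1/2}/(\log\delta^{-1})^{3/2}$, matching the middle term. (iv) If $r > \max(\delta^{1-2\eps}WX, W)$, Theorem \ref{main2} gives only $|\B_r| \lesssim \delta^{-\eps}(WX)^2 r^{-2}W^{-1}$, and the analogous comparison forces $r \lesssim \delta^{\alpha-\eps}X\log\delta^{-1}$, producing an auxiliary bound $|\B| \gtrsim W\delta^{-2\alpha}/(\log\delta^{-1})^2$.

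The main obstacle is case (iv): the bound $W\delta^{-2\alpha}$ is not among the three terms of \eqref{givenest}, and in the regime $W < X\delta^\alpha$ (for instance $W=1$, $X=\delta^{-1}$, $\alpha<1$) it is strictly weaker than the middle term $\delta^{-3\alpha/2}(WX)^{1/2}$. Closing this case is the crux of the proof, and it is where the $\delta^\alpha$-spacing hypothesis on $Y(T)$---so far unused---must come into play: the natural step is to rescale to the $\delta^\alpha$-scale, where the spacing hypothesis becomes ``at most one ball per $\delta^\alpha$-cell along each tube,'' and reapply Theorem \ref{main2} (or its sibling Theorem \ref{main3}) at this coarser scale so as to upgrade the bound to $\delta^{-3\alpha/2}(WX)^{1/2}$. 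Collecting the logarithmic losses from the pigeonhole, the substitutions, and the rescaling accounts for the exponent $3.5$ in the factor $(\log\delta^{-1})^{3.5}$ appearing in \eqref{givenest}.
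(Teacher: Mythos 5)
There is a genuine gap, and it sits exactly where you flagged it. Your regimes (i)--(iii) reproduce what the paper itself carries out as a preliminary ``try'' with the incidence estimate, and the paper shows this route necessarily terminates in the extra term $\de^{-2\alpha}W$ from your case (iv) --- which it identifies as the main enemy, precisely because in the regime $W=1$, $X=\de^{-1}$ (Question \ref{question}) the case $r\ge W$ is the whole problem and the bound $\de^{-2\alpha}$ falls short of $\de^{-\frac{3}{2}\alpha-\frac{1}{2}}$. Your proposed repair --- rescale to the $\de^{\alpha}$-scale and reapply Theorem \ref{main2} or \ref{main3} --- is only a one-sentence hope: after coarsening, the $\de$-tubes are much thinner than the $\de^{\alpha}$-cells, many tubes collapse into one coarse tube, and the spacing hypotheses of Theorems \ref{main2}/\ref{main3} are not inherited in any form you have verified; no estimate is actually derived. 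The paper does not close case (iv) this way at all: it abandons the incidence estimate and runs a Sz\'ekely-type crossing-number argument. Concretely, it replaces tubes by pseudo-tubes, pigeonholes each $Y(T)$ onto a common gap scale $d=\de^{\alpha}$ so that each tube carries $\gtrsim(\log\de^{-1})^{-1}\de^{-\alpha}$ consecutive-ball edges, builds the graph whose edges join consecutive balls along a tube, bounds $cr(G)\le|\T|^2$ (two tubes cross at most once), proves $|E|\gtrsim(\log\de^{-1})^{-2}\de^{-\alpha}|\T|$ by an angular decomposition using the tube-spacing condition (this is where $XW\lesssim\de^{-2+\alpha}$ is used, with a separate reduction passing to $(X',W')$ with $X'W'\sim\de^{-2+\alpha}$ otherwise), and concludes via $|\B|\gtrsim\min\bigl(|E|,|E|^{3/2}/cr(G)^{1/2}\bigr)$. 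None of this machinery appears in your proposal, and the consecutive-edge structure it exploits is genuinely different information from rich-ball counts, so the missing step is not a routine completion.

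A secondary but real defect: even in the regimes where your argument does close, Theorem \ref{main2} costs a factor $C_\e\de^{-\e}$, so your bounds carry $\de^{\e}$-losses (e.g.\ $\de^{-\alpha-1+2\e}$ in case (ii), $\de^{-\frac{3}{2}\alpha+\e/2}(WX)^{1/2}$ in case (iii)), whereas the theorem asserts a bound with only powers of $\log\de^{-1}$; the crossing-number proof is what delivers the purely logarithmic loss. So the proposal cannot yield the stated estimate even outside case (iv) without further modification.
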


Question \ref{question} is a special case of Theorem \ref{thmfur} when $W=1, X=\de^{-1}$.

\begin{remark}
In our theorem, the $Y(T)$ satisfies an evenly spacing condition which is stronger than the $(\de,\alpha)_1$ spacing condition introduced in \cite{katz2001some}. The $(\de,\alpha)_1$ spacing condition roughly says that $\#Y(T)\sim\de^{-\alpha}$ and for any $\de\times w$-subtube $T_w \subset T\cap [0,1]^2$ there holds $\#\{B_\de\in Y(T): B_\de\cap T_w\neq\emptyset\}\lesim (w/\de)^{\alpha}$.
Our tube set $\T$ also satisfies an evenly spacing condition which is stronger than the $(\de,\beta)_2$ spacing condition: $|\T|\sim \de^{-\beta}$ (one may think $\de^{-\beta}=XW$); any $w\times 1$ tube in $[0,1]^2$ contains $\lesim (w/\de)^\beta$ many tubes in $\T$. It was shown in \cite{hera2020improved} Lemma 3.3 that the $\de$-discretized version under the $(\de,\beta)_2$-condition for $\T$ and $(\de,\alpha)_1$-condition for $Y(T)$ will imply the original Furstenberg problem (in terms of Hausdorff dimension).
\end{remark}

\begin{remark}
Given the sharp estimate $\eqref{givenest}$ under the evenly spacing condition, it might be reasonable to ask whether for any $(\alpha,\beta)$-Fustenberg set $E$ we have
$$ \dim_{\text{H}}E\ge \min(\alpha+1,\frac{3}{2}\alpha+\frac{1}{2}\beta,\alpha+\beta)? $$
\end{remark}

\medskip

To end this section, we discuss the plan of this paper. In Section \ref{egsec}, we discuss the sharp examples. In Section \ref{dualitysec}, we discuss the tube-ball duality and show Theorem \ref{main} implies Theorem \ref{main2} and Theorem \ref{main3}. In Section \ref{proofsec}, we prove Theorem \ref{main}. In Section \ref{fursec}, we prove Theorem \ref{thmfur}.

\medskip

\noindent
{\bf Notation.} We use the notation $A\lesim B$ to mean $A\le CB$ for some constant $C>0$.
We use the notation $A\lessapprox B$ in several sections. The meaning of this notation may be slightly different in different places, but the precise definition is given where it appears.

\medskip

\noindent
{\bf Acknowledgements.} We would like to thank Prof. Larry Guth for helpful discussions. We would also like to thank the referee for a careful reading and many helpful suggestions.

\section {Sharp examples} \label{egsec}
In this subsection, we discuss the sharp example for Theorem \ref{main2} when $|\T|\sim |\T_{\max}|$ and Theorem \ref{thmfur}.

The sharp examples for Theorem \ref{main3} and Theorem \ref{main} can be constructed in a similar way as for Theorem \ref{main2} by using the tube-ball duality (which will be discussed in Section \ref{dualitysec}). So we omit the construction for other two theorems.

\subsection{Examples for incidence estimate}\label{egincidence}
First, we construct the example for Theorem \ref{main2}.
For simplicity, we assume $W \mid X$ ($W$ divides $X$).

\textit {Case 1}: $2\le r < W$. 

For each $0 \le a \le W$ and $0 \le b \le X$, draw a line from $(\frac{a}{W},0)$ to $(\frac{b}{X},1)$. These lines, when thickened to $\delta$-tubes, will satisfy the spacing condition as in Theorem \ref{main2}, since two lines are either parallel or differ by angle $\frac{1}{X} \ge \delta$. Let $S$ be the set of rational numbers $\frac{p}{q}$ in $[\frac{1}{4}, \frac{3}{4}]$ such that $\frac{X}{100r}\le p, q \le \frac{100X}{r}$, $\gcd(p,q)=1$, and $p$ is a multiple of $\frac{X}{W}$. We claim that each point of the form $( \frac{c}{q W}, \frac{p}{q})$ with $\frac{p}{q} \in S$ and $c \le qW$ is $r$-rich. To see this, note that the point on the line through $(\frac{a}{W},0)$ and $(\frac{b}{X},1)$ with $y$-coordinate $\frac{p}{q}$ has $x$-coordinate $\frac{p}{q} \cdot \frac{b}{X} + (1-\frac{p}{q}) \cdot \frac{a}{W}$, so it suffices to show the equation
$$\frac{p}{q} \cdot \frac{b}{X} + (1-\frac{p}{q}) \cdot \frac{a}{W}=\frac{c}{qW}$$
has $\gtrsim r$ solutions $(a,b)\ (0\le a\le W,0\le b\le X)$, for any $\frac{p}{q}\in S$ and $c\le qW$. Multiplying by $qW$, the equation is equivalent to
\begin{equation}\label{exeq}
    \frac{pW}{X}\cdot b + (q-p) \cdot a=c.
\end{equation}

Note that $\frac{pW}{X}$ is an integer since we assumed $p$ is a multiple of $\frac{X}{W}$.
We also have $\gcd(\frac{pW}{X}, q-p)=1$, since $\gcd(p, q-p) = 1$. Now we can show \eqref{exeq} has $\gtrsim r$ solutions $(a, b)$. Note that if $(a_0, b_0)$ is a solution (there is always a solution since $\gcd(\frac{pW}{X}, q-p)=1$), then $(a_0+\frac{pW}{X}m, b_0-(q-p)m), m\in\mathbb{Z}$ are also solutions. When $c\le qW$, we can properly choose $\gtrsim r$ many $m\in\mathbb{Z}$ such that $a_0+\frac{pW}{X}m\in[0,W]$ and $b_0-(q-p)m\in [0,X]$.

We still need to check the points $\{ (\frac{c}{qW},\frac{p}{q}): \frac{p}{q}\in S, c\le qW \}$ are $\de$-separated. Consider two different points $(\frac{p}{q} \cdot \frac{b}{X} + (1-\frac{p}{q}) \cdot \frac{a}{W},\frac{p}{q})$ and $(\frac{p'}{q'} \cdot \frac{b'}{X} + (1-\frac{p'}{q'}) \cdot \frac{a'}{W},\frac{p'}{q'})$ in this set. If their second coordinates are different, then since we assumed each of $p$, $p'$ is a multiple of  $\frac{X}{W}$, we see the difference of their second coordinates is
\begin{equation}\label{difference}
   \big|\frac{p}{q}-\frac{p'}{q'}\big|=\big| \frac{p q'-p' q}{q q'}\big| \ge  \frac{X/W}{(X/r)^2}\ge \frac{r}{XW}\ge \de,
\end{equation}
where the last inequality is because of the assumption $r>\de |\T_{\max}|=\de WX$ in Theorem \ref{main2}.
If their second coordinates are same, then since their first coordinates are of form $\frac{c}{qW}$ and $ \frac{c'}{qW}$, we see the difference of their first coordinates is
$$\ge \frac{1}{qW}\sim \frac{r}{XW}\ge \de. $$

Finally, we calculate the cardinality of the set of $r$-rich points: $\{ (\frac{c}{qW},\frac{p}{q}): \frac{p}{q}\in S, c\le qW \}$.
There are $\frac{WX}{r^2}$ elements in $S$ and $\sim \frac{WX}{r}$ choices for $c$, so the number of $r$-rich points is $\sim\frac{W^2 X^2}{r^3}$.

\medskip
\textit{Case 2}: $W < r < X$. 

At each $(\frac{a}{W}, 0)$ with $0 \le a \le W$, place an $X$-bush, i.e. a set of $X^{-1}$-direction separated $\de$-tubes with cardinality $X$. The number of $r$-rich $\de$-balls in each bush is $\sim\frac{X^2}{r^2}$. Actually these $r$-rich points are contained in the ball centered at $(\frac{a}{W},0)$ of radius $\frac{X}{r}\de$. Also note that the the $r$-rich balls from different bush are disjoint (since $\frac{X}{r}\de\le W^{-1}$),  so the total number of $r$-rich points is $\sim\frac{WX^2}{r^2}$.\qed

\subsection{Examples for Furstenberg problem}
Next we discuss the sharp examples for Theorem \ref{thmfur}. Without loss of generality, we may assume the directions of tubes in $\T$ are within $1/10$ angle with the $y$-axis. We also assume $X$ and $W$ are square numbers and $W \mid X$ for technical reasons.

\textit{Case 1}:
$\de^{-\alpha-1}$. 

Choose $\sim \de^{-\alpha}$ many length-$\de$ intervals in $[0,1]$ such that any two intervals have distance $\ge \de^{\alpha}$ from each other. Denote these intervals by $\{I_i\}_i$. We set $\B$ to be all the lattice $\de$-balls that intersect $[0,1]\times\cup_i I_i $. We can easily check $\B$ satisfies the condition in Theorem \ref{thmfur} for any choice of tubes, and 
$$ |\B|\lesim \de^{-\alpha-1}. $$

\medskip
\textit{Case 2}: $\de^{-\alpha}XW$. 

First we fix a set of tubes $\T$ that satisfies the spacing condition. Let $\B$ be the set of lattice $\de$-balls that intersect $([0,1]\times \cup_i I_i) \bigcap \cup_{T\in \T} T $, where $I_i$ are the same as in \textit{Case 1}. Noting $|\T|\sim XW$, we can easily check that
$$ |\B|\lesim \de^{-\alpha}XW. $$

\medskip
\textit{Case 3}: $\de^{-\frac{3}{2}\alpha}(XW)^{\frac{1}{2}}$.

We will borrow the idea from \textit{Case 1} of the examples for incidence estimate in the last subsection. The notation here will be the same as there. We choose the same set of tubes $\T$ as in \textit{Case 1} of the last subsection. We choose $\B$ to be the set of $\de$-balls whose centers are from the set
$\{ (\frac{c}{qW},\frac{p}{q}): \frac{p}{q}\in S, c\le qW \}$. We have
$$ |\B|\lesim \frac{W^2X^2}{r^3}. $$

We check that $\B$ satisfies the condition in Theorem \ref{thmfur}. Fix a $T\in \T$, let the line connecting $(\frac{a}{W},0)$ and $(\frac{b}{X},1)$ be the core line of $T$. We see that the points $\{((1-\frac{p}{q})\frac{a}{W}+\frac{p}{q}\frac{b}{X},\frac{p}{q}):\frac{p}{q}\in S\}$ lie on the core line of $T$. We can also show that these points belong to the set $\{ (\frac{c}{qW},\frac{p}{q}): \frac{p}{q}\in S, c\le qW \}$, since
$$ (1-\frac{p}{q})\frac{a}{W}+\frac{p}{q}\frac{b}{X}=\frac{qW-pa+p\frac{W}{X}b}{qW} $$
and noting that $p$ is a multiple of $\frac{X}{W}$.

We have shown that for each number $\frac{p}{q}\in S$, the core line of $T$ contains points whose $y$-axis is $\frac{p}{q}$. Recall that $\#S\sim \frac{XW}{r^2}$, and from \eqref{difference} we see that
each pair of nearby points have distance $\ge \frac{r^2}{XW}$. If we have $2\le (\de^\alpha XW)^{\frac{1}{2}}\le W $,
then we set $r=(\de^\alpha XW)^{\frac{1}{2}}$ which means $\de^{-\alpha}=\frac{XW}{r^2}$. A simple calculation gives
$$ |\B| \lesim \frac{W^2X^2}{r^3}=\de^{-\frac{3}{2}\alpha}(XW)^{\frac{1}{2}}. $$
Next, we will get rid of the requirement $2\le (\de^{\alpha}XW)^{\frac{1}{2}}\le W$. 

When $(\de^\alpha XW)^{\frac{1}{2}}\le 2$, we just use the example in \textit{Case 2} and note that $\de^{-\alpha}XW\lesim \de^{-\frac{3}{2}\alpha}(XW)^{\frac{1}{2}}$.

Now we can assume $(\de^\alpha X W)^{\frac{1}{2}}\ge 2$. We set another pair $(X',W')=((XW)^{\frac{1}{2}},(XW)^{\frac{1}{2}})$. 
We can easily check $2\le(\de^{\alpha}X'W')^{\frac{1}{2}}\le W'$. If we just construct the sets $\T, \B$ as above using parameters $(W',X')$, we have
$$ |\B| \lesim \de^{-\frac{3}{2}\alpha}(X'W')^{\frac{1}{2}}=\de^{-\frac{3}{2}\alpha}(XW)^{\frac{1}{2}}. $$
However, our problem is: with the new pair $((XW)^{\frac{1}{2}},(XW)^{\frac{1}{2}})$, the $\T$ doesn't satisfy the spacing condition in Theorem \ref{thmfur}. We overcome this by using the irrational translation trick in \cite{wolff1999recent}. We slightly modify the definition $\T$. For each $0\le a,b\le (XW)^{\frac{1}{2}}$, draw a line segment from $(\frac{a}{(XW)^{\frac{1}{2}}},0)$ to $(\frac{\sqrt{2}b}{(XW)^{\frac{1}{2}}},0)$. We define $\T$ to be the set of tubes that are the $\de$-neighborhoods of these line segments. The intersection pattern of tubes and balls are the same with $\sqrt{2}$ replaced by $1$, so we still get the bound
$$ |\B| \lesim \de^{-\frac{3}{2}\alpha}(XW)^{\frac{1}{2}}. $$
But now, $\T$ satisfies the spacing condition in Theorem \ref{thmfur}. To check this, we consider any $W^{-1}$-tube whose intersection with $\{y=0\}$ is $[a_0,a_0+W^{-1}]$ and intersection with $\{y=1\}$ is $[b_0,b_0+W^{-1}]$. We see that the line segments that lie in this $W^{-1}$-tube are those connecting points $(a_0+\frac{a}{(XW)^{\frac{1}{2}}},0)$ and $(b_0+\frac{\sqrt{2}b}{(XW)^{\frac{1}{2}}},1)$ for $0\le a,b \lesssim (\frac{X}{W})^{\frac{1}{2}}$. 
It suffices to show $\frac{a-\sqrt{2}b}{(XW)^{\frac{1}{2}}}\gtrsim \frac{1}{X}$, which is a simple result of the fact that $|a-\sqrt{2}b|= \frac{|a^2-2b^2|}{|a+\sqrt{2}b|}\ge\frac{1}{|a+\sqrt{2}b|}\ge \frac{1}{4\max(a,b)} $.
\qed

\section{Tube-ball duality} \label{dualitysec}
We know there is a duality between lines and points. More precisely, in the projective plane every point has its dual line and every line has its dual point. A point and a line intersect if and only if their dual line and dual point intersect. So, we can transform the point-line incidence into line-point incidence. 

In this section, we are going to show there is also a duality between $\de$-tubes and $\de$-balls that lie in (or near) $[0,1]^2$. The advantage is that we can transform Theorem \ref{main2} and Theorem \ref{main3} into Theorem \ref{main}. We assume all the $\de$-tubes and $\de$-balls considered here lie in $\Pi_1=[0,1]^2$, which we call the \textit{physical space}. We also set $\Pi_2=[0,1]^2$ which we call the \textit{dual space}. Our goal is to define a correspondence between these two spaces so that: the $\de$-balls (respectively $\de$-tubes) in $\Pi_1$ correspond to $\de$-tubes (respectively $\de$-balls) in $\Pi_2$, and the ball-tube incidence in $\Pi_1$ correspond to the tube-ball incidence in $\Pi_2$. A similar discussion for such duality can be found in \cite{orponen2021hausdorff} (Section 2.3).


\subsection{Line-point duality} 
First, let's look at the line-point duality between $\Pi_1$ and $\Pi_2$. We will use $(x,y)$ to denote the coordinates of $\Pi_1$ and $(u,v)$ to denote the coordinates of $\Pi_2$.

Define $\cP_2$ to be all the points in $\Pi_2$. For any $(u_0,v_0)\in\cP_2$, we define the corresponding line in $\Pi_1$ to be $$l_1(u_0,v_0): v_0 y=x-u_0.$$ 
We also define 
$$\cL_1:=l_1(\cP_2)=\{ v_0 y=x-u_0:(u_0,v_0)\in\cP_2 \},$$
which is a set of lines in $\Pi_1$.
We see $l_1: \cP_2\leftrightarrow\cL_1$ is a one-to-one correspondence.

\begin{remark}\label{rmk}
There is a good way to think about this correspondence. Given a point $(u_0,v_0)$, then its corresponding line $l_1$ has ``position" $u_0$ (which is its intersection with $\{y=0\}$) and has ``direction" $v_0$ (which is the inverse of its slope). In the next subsection, we will define a correspondence between balls in $\Pi_2$ and tubes in $\Pi_1$ so that a ball with center $(u_0,v_0)$ corresponds to the tube with ``position" $u_0$ and ``direction" $v_0$.
\end{remark}

Next, we define $\cP_1$ to be all the points in $\Pi_1$. For any $(x_0,y_0)\in \cP_1$, we know the lines passing through it are of the form $v y=x-x_0+v y_0$. This motivates us to define the line in $\Pi_2$ corresponding to $(x_0,y_0)$ as 
$$l_2(x_0,y_0): u=x_0-v y_0.$$
We also define 
$$\cL_2=l_2(\cP_1)=\{ u=x_0-v y_0: (x_0,y_0)\in\cP_1 \},$$
which is a set of lines in $\Pi_2$.
We see $l_2:\cP_1\leftrightarrow\cL_2$ is a one-to-one correspondence.

We can also show the incidence is preserved under the duality. Given a point $(x_0,y_0)\in\cP_1$ and a line $l_1(u_0,v_0):v_0 y=x-u_0 \in\cL_1$, we have $(x_0,y_0)\in l_1(u_0,v_0)\Longleftrightarrow (u_0,v_0)\in l_2(x_0,y_0)$ by definition.

\subsection{Tube-ball duality}\label{tubeball}
Now we generalize our line-point duality to tube-ball duality.

For $(u_0,v_0)\in \cP_2$, let $B=B_\de(u_0,v_0)$ be the ball of radius $\de$ with center $(u_0,v_0)$. The intersection of its image under $l_1$ with $[-2,2]^2$ is roughly a $\de$-tube. That is to say:
$$ l_1(B):=\bigcup_{(u,v)\in B}l_1(u,v)\bigcap [-2,2]^2 $$
is roughly a $\de$-tube.
Intuitively, one can think of $l_1(B)$ as the $\de$-neighborhood of $l_1(u_0,v_0)\bigcap [-2,2]^2$. If we let $\B_2$ be all the lattice $\de$-balls in $\Pi_2$, and let $\T_1:=\{ l_1(B):B\in\B_2 \}$, then $l_1: \B_2\leftrightarrow \T_1$ is a one-to-one correspondence.
We can similarly define $l_2, \B_1$ and $\T_2$, so that $l_2:\B_1 \leftrightarrow \T_2$ is a on-to-one correspondence.

Moreover, we can check the incidence is preserved under the duality, i.e. given a ball $B_1\in\B_1$ and a tube $T_1=l_1(B_2)\in\T_1$, then $(B_1,T_1)$ counts one incidence in $\Pi_1$ if and only if $(l_1^{-1}(T_1),l_2(B_1))=(B_2,T_2)$ counts one incidence in $\Pi_2$.

To get a better understanding of this tube-ball duality, see Figure \ref{dual}. Here, for each orange ball $B$ in $\Pi_2$, there is a corresponding orange tube $l_1(B)$ in $\Pi_1$. Similarly, for each blue ball $B'$ in $\Pi_1$, there is a corresponding blue tube $l_2(B')$ in $\Pi_2$. Also the incidence is preserved in the sense that the orange tube and the blue ball intersect if and only if the corresponding orange ball and blue tube intersect.

\begin{figure}
    \centering
    \includegraphics{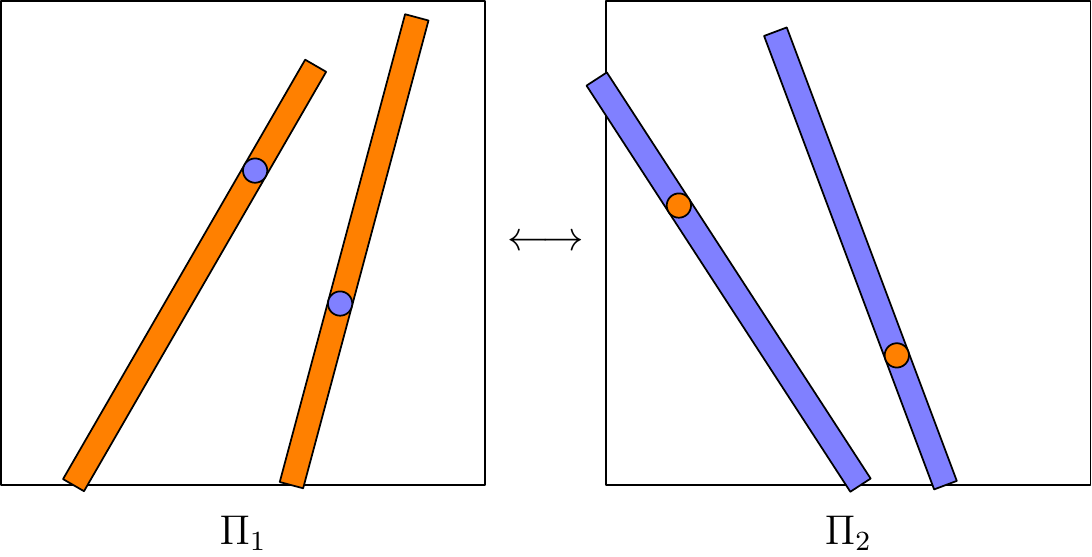}
    \caption{Tube-ball duality.}
    \label{dual}
\end{figure}


\subsection{Relations between the theorems}\label{subsec3.3}
We prove that Theorem \ref{main} implies Theorem \ref{main2} and Theorem \ref{main3} in this subsection.

As mentioned in the beginning of this section, we can use this duality to turn from ball-tube incidence to tube-ball incidence.
For example, if we are given a set of $\de$-balls $\B$ and $\de$-tubes $\T$ and $\T$ satisfying some spacing condition, then by duality this is equivalent to the problem for a set of $\de$-balls $\B'$ and $\de$-tubes $\T'$ with $\B'$ satisfying a similar spacing condition. What we did is we transfer the spacing condition from tubes to balls. This gives the heuristic that Theorem \ref{main2} (or Theorem \ref{main3}) can be reduced to Theorem \ref{main}.

However, there is still a shortage that the tubes $\T_i\  (i=1,2)$ we defined do not contain all the tubes lying in $\Pi_i\ (i=1,2)$. For example, all the tubes in $\T_1$ have slopes in $[-\infty,-1]\cup[1,\infty]$, which means $\T_1$ only contains the tubes that form an angle $\le \frac{\pi}{4}$ with $y$-axis. However, we can find several rotations $\{\rho_k\}_{k\le 100}$ (for example, $\rho_k$ is the rotation with angle $\frac{2\pi k}{100}$ and center $(\frac{1}{2},\frac{1}{2})$), so that $\bigcup_{k=1}^{100} \rho_k(\T_1)$  are morally all the $\de$-tubes in $\Pi_1$. Since $\B_1$ is all the $\de$-balls in $\Pi_1$, $\B_1$ is morally the same under any rotation: $\rho_k(\B_1)=\B_1$.

Let us see how this work. Suppose we are given a set of $\de$-tubes $\T$ which satisfies some spacing condition. We want to estimate the number of $r$-rich balls $B_r(\T)$. By pigeonholing, we have
$$ |B_r(\T)|\lesim \sum_{k=1}^{100} |B_{r/100}(\T\cap \rho_k(\T_1))|=\sum_{k=1}^{100} |B_{r/100}(\rho_k^{-1}(\T)\cap \T_1)|.$$
By the tube-ball duality, it is bounded by
$$\sum_{k=1}^{100} |T_{r/100}(\B_k')|.$$
where $\B'_k=l_1^{-1}(\rho_k^{-1}(\T)\cap \T_1)$. Now $\B_k'$ inherits the same spacing condition from $\T$, so it suffices to estimate the number of $r/100$-rich tubes assuming $\de$-balls satisfy some spacing condition.

To prove that Theorem \ref{main} implies Theorem \ref{main2} (or Theorem \ref{main3}), we only need to verify: If $\T$ is a set of tubes in $\T_1$ that satisfies spacing condition in Theorem \ref{main2} (Theorem \ref{main3}), then the set of $\de$-balls $\B=\{l_1^{-1}(T):T\in \T\}$ satisfies the spacing condition in Theorem \ref{main}.

First, we suppose $\T$ is a set of tubes in $\T_1$ that satisfies spacing condition in Theorem \ref{main2}. That is, any $W^{-1}$-tube contains at most $\frac{X}{W}$ many tubes of $\T$, and the directions of these tubes are $\frac{1}{X}$-separated. For any $W^{-1}$-ball $B_{W^{-1}}$ in $\Pi_2$ with center $(u_0,v_0)$, consider the $W^{-1}$-tube $T_{W^{-1}}$ with ``position" $u_0$ and ``direction" $v_0$ (see Remark in section \ref{rmk}), i.e. $T_{W^{-1}}$ is the $W^{-1}$-neighborhood of $v_0y=x-u_0$. We see that the map $l_1$ induce a correspondence between the $\de$-balls lying in $B_{W^{-1}}$ and the $\de$-tubes lying in $T_{W^{-1}}$. By the spacing condition, the tubes $T\in\T$ that lie in $T_{W^{-1}}$ are $\frac{1}{X}$-separated in direction, so the corresponding balls in $B_{W^{-1}}$ have $\frac{1}{X}$-separated $v$-coordinates. That means, if we partition $B_{W^{-1}}$ into about $\frac{X}{W}$ many $W^{-1}\times X^{-1}$-rectangles (the long side of the rectangles point to the direction of $u$-axis), we have that in each rectangle there is at most one $\de$-ball from $\B=\{l^{-1}_1(T): T\in\T\}$. Since our $B_{W^{-1}}$ can be any $W^{-1}$-ball, we see that $\B$ satisfies the spacing condition in Theorem \ref{main}.

Similarly we could make the same argument as above for Theorem \ref{main3} by switching the role of $u$-coordinate and $v$-coordinate in $\Pi_2$.
First, we may assume the line $\ell$ in Theorem \ref{main3} is parallel to $x$-axis by rotation. Next, we may assume $\ell$ is $\{y=0\}$, otherwise we just consider the incidence estimate in the portion of $[0,1]^2$ above $\ell$ and the portion of $[0,1]^2$ below $\ell$ separately. If the $\ell$ in Theorem \ref{main3} is $\{y=0\}$, we can prove the following result:
Let $\T$ be a set of tubes in $\T_1$ that satisfies the spacing condition in Theorem \ref{main3}. Partition $\Pi_2=[0,1]^2$ into $X^{-1}\times W^{-1}$-rectangles (the long side of the rectangles now point to the direction of $v$-axis which is different from that in the last paragraph). Then, each rectangle contains at most one ball from $\B=\{l^{-1}_1(T): T\in\T\}$. Since the proof is similar, we omit the proof.

\section{Proof of Theorem \ref{main}}\label{proofsec}
In this section, we prove Theorem \ref{main}.
We will first prove two lemmas and then use them to finish the proof of Theorem \ref{main}.

\subsection{Two lemmas}
First, we will need the ``dual version'' of Proposition 2.1 from \cite{guth2019incidence}, which was inspired by ideas of Orponen \cite{orponen2018dimension} and Vinh \cite{vinh2011szemeredi}. We state the version for $n = 2$. The dual version just follows from the original one (Proposition 2.1 in \cite{guth2019incidence}) by the tube-ball duality discussed in Section \ref{dualitysec}, so we omit the proof.

\begin{prop}\label{twopointone}
Fix a tiny $\eps > 0$. There exists a constant $C(\eps)$ with the following property: Suppose that $\B$ is a set of unit balls in $[0, D]^2$ and $\T$ is a set of essentially distinct tubes of length $D$ and radius $1$ in $[0, D]^2$. Suppose that each tube of $\T$ contains about $E$ balls of $\B$. Let $S = D^{\eps/20}$. Then either:

\textbf{Thin case.} $|\T| \leapp S^2 E^{-2} |\B| D$, or

\textbf{Thick case.} There is a set of finitely overlapping $2S \times D$-tubes $U_j$ (heavy tubes) such that:
\begin{enumerate}[(1)]
    \item $\bigcup_j U_j$ contains a fraction $\gtrapprox 1$ of the tubes of $\T$;
    
    \item Each $U_j$ contains $\geapp SE$ balls of $\B$.
\end{enumerate}
In particular, if we define $\wt\T$ to be the set of $\gtrapprox SE$-rich $2S\times D$-tubes, we have
\begin{equation}\label{tube}
    |\T|\lessapprox S^2 (E^{-2}|\B|D+|\wt \T|).
\end{equation}
Here, $\leapp$ means $\le C(\eps) D^{\eps^7}$.
The reason for \eqref{tube} is that: either in Thin case, we have $|\T|\leapp S^2E^{-2}|\B|D$; or in the Thick case, most tubes in $\T$ are contained in $\wt\T$ and each $2S\times D$-tube in $\wt \T$ contains at most $ 4S^2$ tubes in $\T$. 
\end{prop}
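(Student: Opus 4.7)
The plan is to reduce this to the ``primal'' Proposition 2.1 of \cite{guth2019incidence} by means of the tube-ball duality developed in Section \ref{dualitysec}. After a rescaling by $1/D$, the setup becomes $\delta$-balls and $\delta$-tubes in $[0,1]^2$ with $\delta = 1/D$; all incidences and the numerical parameters $S$, $E$, $|\B|$, $|\T|$ are preserved. Since the tubes in $\T_1$ only cover half the angular range in $[0,1]^2$, a constant-loss pigeonhole over $O(1)$ rotations (as at the end of Section \ref{subsec3.3}) lets me assume $\T \subset \T_1$ and $\B \subset \B_1$.

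Next I would invoke the duality maps $l_1^{-1} : \T_1 \to \B_2$ and $l_2 : \B_1 \to \T_2$ to produce the dual configuration $\T' := l_1^{-1}(\T)$ and $\B' := l_2(\B)$ inside $\Pi_2 = [0,1]^2$. Because the duality is incidence-preserving, the hypothesis that each $T \in \T$ contains $\sim E$ balls of $\B$ translates directly into the statement that each ball of $\T'$ is incident to $\sim E$ tubes of $\B'$ --- that is, each ball of $\T'$ is $E$-rich for $\B'$. This is precisely the input required by the primal Proposition 2.1 of \cite{guth2019incidence}, which yields a dichotomy: either $|\T'| \leapp S^2 E^{-2} |\B'| D$ (thin case), or there is a finitely overlapping collection of $2S$-fat tubes $U'_j \subset \Pi_2$ covering a fraction $\geapp 1$ of $\T'$, each containing $\geapp SE$ balls of $\B'$.

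Finally, I would translate the dichotomy back through the duality. The thin bound is preserved since $|\T'|=|\T|$ and $|\B'|=|\B|$. In the thick case, a $2S$-fat tube in $\Pi_2$ corresponds under $l_1$ (after undoing the rescaling) to a $2S \times D$-fat tube $U_j \subset \Pi_1$, and the balls of $\B'$ sitting inside $U'_j$ correspond under $l_2^{-1}$ to balls of $\B$ met by $U_j$. The combined bound \eqref{tube} then follows from the observation that each $2S \times D$-tube can house at most $\lesim S^2$ essentially distinct $\delta$-tubes of $\T$, so $|\T| \leapp S^2(E^{-2}|\B|D + |\wt\T|)$.

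The main obstacle I anticipate is bookkeeping rather than conceptual: one must verify that a $2S$-dilation in the dual space corresponds cleanly to a $2S$-transverse dilation in the primal (this is a direct check from $l_1(u_0,v_0)\colon v_0y=x-u_0$, where moving $(u_0,v_0)$ by $2S$ changes the primal tube's position and direction each by $\lesim S$), and that the richness threshold $SE$ is preserved under the identification. Handling tubes falling outside $\T_1$ is purely a rotation pigeonhole, but it is essential since the duality map is only defined on the non-horizontal angular sector.
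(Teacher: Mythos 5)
Your proposal takes essentially the same route as the paper: the paper gives no separate argument for Proposition \ref{twopointone}, stating only that it follows from Proposition 2.1 of \cite{guth2019incidence} via the tube-ball duality of Section \ref{dualitysec}, which is precisely your reduction (rescale by $D$, pigeonhole over $O(1)$ rotations, dualize, apply the primal proposition, translate back), and your derivation of \eqref{tube} is the same counting ($\lesssim S^2$ essentially distinct tubes of $\T$ in each $2S\times D$-tube) indicated in the statement itself. One minor bookkeeping correction: in the dual space the heavy objects produced by the primal proposition are $2S$-balls containing a large fraction of the dual balls $\T'$ and meeting $\gtrapprox SE$ dual tubes of $\B'$; it is their images under $l_1$ that are the $2S\times D$-tubes $U_j$ in $\Pi_1$ — this does not affect the argument.
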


We will need a slight generalization which is our first lemma:
\begin{lemma}\label{cor21}
Fix a tiny $\eps > 0$. There exists a constant $C(\eps)$ with the following property: Let $\delta < 1$. Suppose that $\U$ is a set of $\delta \times 1$-rectangles in $[0, D]^2$. Let $S = D^{\eps/20}$, and define $T_r(\U)$ to be the set of $\delta \times D$-rectangles that contain at least $r$ rectangles from $\U$, $\wt T_{\tilde r}(\U)$ to be the set of $2S\delta \times D$-rectangles that contain at least $\td r$ rectangles from $\U$. Here we set $\tR$ to be a number $ \geapp Sr$. Then:
\begin{equation}\label{estimatetube}
    |T_r(\U)| \leapp_\e S^2 (r^{-2} |\U| D + |\wt T_{\tilde r}(\U)|).
\end{equation}
Here, $\leapp$ means $\le C(\eps) D^{\eps^7}$.
\end{lemma}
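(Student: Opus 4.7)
The plan is to reduce Lemma~\ref{cor21} to Proposition~\ref{twopointone} by combining an angular pigeonholing of $\U$ with an anisotropic rescaling of the plane.

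First, I will decompose $\U$ by direction into $\sim(S\delta)^{-1}$ groups $\U = \bigsqcup_\Theta \U_\Theta$, each of angular width $\sim S\delta$. The key geometric fact is that a $\delta \times 1$-rectangle is contained in a $w \times D$-rectangle only when their direction vectors differ by $O(w)$. Since both the width of a regular tube in $T_r(\U)$ ($w = \delta$) and of a heavy tube in $\wt T_{\tilde r}(\U)$ ($w = 2S\delta$) are comparable to the group width $S\delta$, each such tube meets rectangles from only $O(1)$ adjacent groups. Consequently $|T_r(\U)| \lesssim \sum_\Theta |T_{r}(\U_\Theta)|$ and $\sum_\Theta |\wt T_{\tilde r}(\U_\Theta)| \lesssim |\wt T_{\tilde r}(\U)|$, neither of which incurs a loss proportional to the (large) number of groups.

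Next, for a fixed $\Theta$ I will rotate so the group is centered on the $x$-axis and apply the rescaling $\varphi\colon(x,y) \mapsto (x, y/\delta)$. This maps $[0,D]^2$ to $[0,D] \times [0, D/\delta]$, sends each $\delta \times D$-tube to a $1 \times D$-tube, and each $2S\delta \times D$-heavy tube to a $2S \times D$-tube---exactly the geometry of Proposition~\ref{twopointone}. The rescaled $\delta \times 1$-rectangles, however, are not unit balls but $O(S) \times O(1)$-parallelograms (since direction variation within the group is $S\delta$, which the anisotropic rescaling inflates to an $O(1)$-radian sector). I will therefore cover each rescaled parallelogram by $O(S)$ essentially distinct unit balls, producing $\B_\Theta$ with $|\B_\Theta| \lesssim S|\U_\Theta|$; any rescaled tube containing $r$ rectangles of $\U_\Theta$ then contains $\sim rS$ balls of $\B_\Theta$.

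Because the rescaled ambient $[0,D]\times[0,D/\delta]$ is not a square, I will further chop it into $\sim \delta^{-1}$ consecutive square slabs $Q_k = [0,D] \times [kD,(k+1)D]$. Rescaled atoms have $y$-extent $O(S) \ll D$ and rescaled tubes have $y$-extent $O(D)$, so every atom and every tube lies in $O(1)$ slabs. In each slab I will apply Proposition~\ref{twopointone} with richness parameter $E = rS$ to $\B_\Theta \cap Q_k$ and the length-$D$ tubes, then sum the resulting bounds over slabs and direction groups. Accounting for $|\B_\Theta| \lesssim S|\U_\Theta|$ and the $O(1)$ overlap constants, the ball-count term simplifies to $\lesssim S \cdot r^{-2}|\U|D$ and the heavy-tube term to $\lesssim S^2|\wt T_{\tilde r}(\U)|$, yielding the claim (in fact slightly stronger than it).

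The hard part will be choosing the angular scale $S\delta$ correctly. A finer partition (width $\delta$) keeps the rescaled atoms as unit balls but forces each $2S\delta$-wide heavy tube to appear in $\Omega(S)$ direction groups, blowing up the second term by a factor of $S$. A coarser partition collapses that redundancy but breaks the pigeonhole for regular tubes in $T_r(\U)$. The balanced choice $S\delta$ controls both effects simultaneously, and the resulting mild elongation of rescaled atoms is absorbed by the explicit $O(S)$-ball cover, which simply reshuffles factors of $S$ between the two terms while keeping both bounded by $S^2$.
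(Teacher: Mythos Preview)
Your rescaling step has a gap. With direction groups of angular width $\sim S\delta$, a $\delta\times D$-tube in $T_r(\U_\Theta)$ can point anywhere within $\sim S\delta$ of the group center (it need only lie within $\delta$ of \emph{some} rectangle in $\U_\Theta$). Under $\varphi:(x,y)\mapsto(x,y/\delta)$ such a tube becomes a parallelogram of $x$-extent $D$, vertical width $1$, and slope up to $S$; as a genuine tube it has perpendicular width $\sim 1/S$ and length $\sim DS$, not $1\times D$. Hence your claims that $\varphi$ ``sends each $\delta\times D$-tube to a $1\times D$-tube'' and that ``rescaled tubes have $y$-extent $O(D)$'' are both false: the $y'$-extent is $O(DS)$, so a rescaled tube meets $\sim S$ slabs, and inside any one slab it is a thin sheared segment to which Proposition~\ref{twopointone} does not apply. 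The ball count ``$\sim rS$'' is likewise non-uniform, since a tube of rescaled slope $s$ meets only $\sim r\max(1,s)$ of your covering balls.

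The paper avoids this by taking angular scale $\delta$ rather than $S\delta$, packaged together with a transverse position into $D\delta\times D$ cells $R_j$ (equivalently, your slabs pulled back to the original picture, combined with $\delta$-direction groups). Inside a cell every rectangle and every $\delta\times D$-tube has direction within $O(\delta)$ of $\theta_j$, so rescaling the cell to $[0,D]^2$ genuinely produces unit balls and $1\times D$-tubes, and Proposition~\ref{twopointone} applies directly. Your worry that at scale $\delta$ each heavy $2S\delta\times D$-tube is rich for $\Omega(S)$ groups is correct for the naive global count, but the heavy tubes that Proposition~\ref{twopointone} actually outputs when applied inside a cell live in that cell and therefore have direction within $O(\delta)$ of $\theta_j$; any fixed global heavy tube then lies in only $O(1)$ cells, giving $\sum_j|\wt T_{\tilde r}(\U_j)|\lesssim|\wt T_{\tilde r}(\U)|$ with no $S$-loss. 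So the ``balanced'' scale $S\delta$ is unnecessary; scale $\delta$ works once you localize in position as well as direction, which is exactly the paper's cell decomposition.
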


Note that Proposition \ref{twopointone} corresponds to $\delta = 1$.

\begin{proof}
Consider about $\de^{-1}$ many $\de$-separated directions. For each direction, we tile $[0,D]^2$ with rectangles pointing in this direction of dimensions $D\de\times D$. We call these rectangles cells. Denote these cells by $\{R_j\}_{j=1}^M$, then one actually sees that the number of cells is $M\sim \de^{-2}$. One also sees that these cells are essentially distinct.

Next, for each $\de\times 1$-rectangle  $U\in\U$, we will attach it to a cell. We observe that there is a cell $R_j$ such that all $\de\times D$-rectangles that contain this $\de\times 1$-rectangle $U$ are essentially contained in $R_j$. We attach this $U$ to $R_j$. Now for each $j$, we let $\U_j$ be the rectangles in $\U$ that are attached to $R_j$. We have
$$ \sum_j |\U_j|=|\U|, $$
\begin{equation}\label{tube1}
    |T_r(\U)|= \sum_j |T_r(\U_j)|, 
\end{equation} 
\begin{equation}\label{tube2}
     |\wt T_{\tilde r}(\U)|\gtrsim \sum_j |\wt T_{\tilde r}(\U_j)|.
\end{equation}
The reason for the last inequality is that a $2S\de\times D$-rectangle cannot contain $\de\times 1$-rectangles from more than $O(1)$ different $\U_j$, which means each tube in $\wt T_{\tilde r}(\U)$ belongs to $O(1)$ many $\wt T_{\tilde r}(\U_j)$.

For each $R_j$, we rescale so that $R_j$ becomes $[0,D]^2$. Also, $\U_j$ becomes a set of unit squares and any $\de\times D$-tube in $R_j$ becomes a $1\times D$-tube. Applying Proposition \ref{twopointone}, we see from \eqref{tube} that
$$ |T_r(\U_j)|\lessapprox S^2 (r^{-2}|\U_j|D+|\wt T_{\tilde r}(\U_j)|). $$
Summing over $j$ and using \eqref{tube1} and \eqref{tube2}, we proved \eqref{estimatetube}.
\end{proof}

Our second lemma concerns about the case when $X \sim \delta^{-1}$ in Theorem \ref{main}. It is actually the dual version of Corollary 5.5 in \cite{demeter2020small}. We state our lemma:

\begin{lemma}\label{highx}
Let $1 \le W \le \delta^{-1}$. Tile $[0, 1]^2$ with $W^{-1} \times \delta$ rectangles. Let $\B$ be a set of $\delta$-balls with at most one ball in each rectangle. 
We denote $|\B_{\max}| = W\delta^{-1}$.
Then for $r > \max(\delta^{1-3\e} |\B_{\max}|, 1)$, the number of $r$-rich tubes is bounded by 
\begin{equation*}
    |T_r(\B)| \lesssim_\e \delta^{-\e} \frac{|\B| |\B_{\max}|}{Wr^2}.
\end{equation*}
\end{lemma}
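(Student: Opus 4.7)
My first move would be to rescale by $\delta^{-1}$, working in $[0,D]^2$ with $D=\delta^{-1}$. In these coordinates $\B$ becomes a set of unit balls with at most one ball per $(D/W)\times 1$ rectangle, and the $\delta$-tubes become $1\times D$-tubes. This puts us in the exact setting of Proposition \ref{twopointone}.

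Before invoking the proposition I would record one consequence of the spacing hypothesis. For any $1\times D$-tube in $[0,D]^2$, a direction-by-direction bookkeeping (going through unit $y$-strips and using the $D/W$-separation in $x$ per strip) gives that the tube contains at most $O(W+D)=O(D)$ balls. In particular, only $r \lesssim D$ is interesting, since $T_r(\B)$ is trivially empty for larger $r$.

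Next I would dyadically pigeonhole $T_r(\B) = \bigcup_E \T_E$ into $O(\log D)$ classes, where $\T_E$ consists of tubes containing $\sim E$ balls and $E$ runs over dyadic values in $[r,O(D)]$. I apply Proposition \ref{twopointone} to each $\T_E$. The Thin case gives $|\T_E| \lessapprox S^2 E^{-2}|\B|D$ immediately. In the Thick case one gets finitely overlapping heavy $2S\times D$-tubes $\{U_j\}$ with $\geapp SE$ balls apiece that carry most of $\T_E$; combining the fact that each $U_j$ contains at most $4S^2$ essentially distinct $1\times D$-tubes (used in the derivation of \eqref{tube}) with the finite-overlap bound $|\{U_j\}|\lesssim |\B|/(SE)$ yields $|\T_E|\lessapprox S|\B|/E$ in this case.

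It remains to check that both candidate bounds are $\lesssim \delta^{-\eps}|\B|D/E^2$ and to sum. The Thin bound is already of this form since $S^2=\delta^{-\eps/10}$. For the Thick bound, we need $SE\lesssim \delta^{-\eps} D$, which follows comfortably from $E\le O(D)$ and $S=\delta^{-\eps/20}$. Summing over the $O(\log D)$ dyadic values of $E\ge r$ gives a geometric series dominated by $E=r$, leaving $|T_r(\B)|\lesssim \delta^{-\eps}|\B|D/r^2$, which upon substituting $|\B_{\max}|=WD$ is exactly the target inequality. I anticipate that the main nuisance will be bookkeeping of the $\lessapprox$ factors and the $\log D$ from the pigeonhole, but these are of the form $D^{O(\eps^7)}\log D$ and are absorbed by the $\delta^{-\eps}$ slack; the hypothesis $r>\delta^{1-3\eps}|\B_{\max}|$ seems to play the role of guaranteeing that this whole scheme outperforms trivial bounds (in particular that $r$ exceeds $W$ by a polynomial factor in $\delta^{-\eps}$).
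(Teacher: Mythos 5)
There is a genuine gap in the Thick-case step, and it is fatal. You claim $|\{U_j\}|\lesssim|\B|/(SE)$ from the finite-overlap property of the heavy tubes. But ``finitely overlapping'' in Proposition~\ref{twopointone} is only a pairwise essential-distinctness condition on the $2S\times D$-tubes; it does \emph{not} bound the pointwise multiplicity $\sum_j\mathbf{1}_{U_j}$. Near a ``heavy point'' the $U_j$ can stack up in all $\sim D/S$ directions, so $\sum_j|\B\cap U_j|$ is not $O(|\B|)$ and your count of heavy tubes collapses. A concrete test: take $\B$ to be all essentially distinct $\delta$-balls inside a fixed $\delta^{1/2}$-square, so after rescaling $|\B|\sim D$ unit balls sit in a $D^{1/2}$-square. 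Every $1\times D$-tube through the square contains $\sim D^{1/2}$ balls ($E\sim D^{1/2}$) and $|\T|\sim D^{3/2}$. The Thin bound $S^2E^{-2}|\B|D\sim S^2D$ is far below $D^{3/2}$, so we are in the Thick case; but \emph{every} $2S\times D$-tube through the square is $\geapp SE$-rich, there are $\sim D^{3/2}/S^2$ of them, and your claimed bound $|\{U_j\}|\lessapprox D^{1/2}/S$ is off by a factor of $D/S$. In fact your final inequality $|T_r(\B)|\lessapprox|\B|D/r^2$ fails outright here: with $r\sim D^{1/2}$ the right side is $\sim D$, while $|T_r(\B)|\sim D^{3/2}$. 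Of course this $\B$ violates the $W^{-1}\times\delta$ spacing hypothesis (it puts $\sim\delta^{-1/2}$ balls in a single $1\times\delta$ strip when $W=1$) — which is exactly the point: your argument never invokes the spacing hypothesis on $\B$, so it ``proves'' a statement that is simply false.

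The spacing hypothesis has to be fed back into the Thick case, and that is what forces a multi-scale induction. This is precisely what the paper does: it proves Proposition~\ref{thm5.4dual} by induction on scale, where the Thick case of Proposition~\ref{twopointone} does not terminate the argument but instead passes to $S\delta$-balls and $2S\delta$-tubes, pigeonholes on the local multiplicity $M$, and applies the induction hypothesis with the inherited (scaled) spacing data $\tilde N,\tilde W$. The output is a pair $(s,M_s)$ at the favorable scale, and \emph{that} is where the hypothesis $r>\delta^{1-3\eps}|\B_{\max}|$ enters: it forces $sW\delta\le1$, hence $M_s\leapp s$, which is what turns \eqref{thm5.4-1} into the claimed $|\B|\delta^{-1}/r^2$. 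The fact that you nowhere used $r>\delta^{1-3\eps}|\B_{\max}|$ (you remark you suspect it is only cosmetic) should have been a warning sign: it is load-bearing, and a single application of Proposition~\ref{twopointone} with a direct Thick bound cannot replace the recursion.
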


\begin{remark}
Lemma \ref{highx} actually takes care of the case when $r > W$ by rescaling.
\end{remark} 

To prove Lemma \ref{highx}, we need the following  dual version of Theorem 5.4 from \cite{demeter2020small}.

\begin{prop}\label{thm5.4dual}
Let $1 \le W \le \delta^{-1}$. Tile $[0, 1]^2$ with $W^{-1} \times \delta$-rectangles. Let $\B$ be a set of $\delta$-balls with at most $N$ balls in each rectangle. Let $r\ge 1$ and $T_r(\B)$ be a set of essentially distinct $\de$-tubes, each of which contains at least $r$ balls in $\B$. Then there exist a scale $1 \le s \le \delta^{-1}$ and an integer $M_s$ such that
\begin{equation} \label{thm5.4-1}
    |T_r(\B)| \lessapprox \frac{|\B| M_s \de^{-1}}{s r^2},
\end{equation}
\begin{equation} \label{thm5.4-2}
    r \lessapprox \frac{M_s \de^{-1}}{s^2},
\end{equation}
\begin{equation}\label{thm5.4-3}
    M_s \lessapprox Ns \max(1, s W \de).
\end{equation}
Here $\lessapprox$ means $\le C_\e \de^{-\e}$ for any $\e>0$.
\end{prop}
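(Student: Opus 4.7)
The plan is to prove Proposition \ref{thm5.4dual} by a multi-scale iteration of the thin/thick dichotomy from Proposition \ref{twopointone} and Lemma \ref{cor21}, following the template of Theorem 5.4 in \cite{demeter2020small} in the tube/ball-dualized form. I would begin with the collection $T_r(\B)$ of $r$-rich $\delta$-tubes and, after rescaling by $\delta^{-1}$ so that the balls have unit radius, apply Proposition \ref{twopointone}. In the thin case, the conclusion of Proposition \ref{thm5.4dual} already holds at scale $s = 1$ with $M_1 \sim N$. Otherwise, the thick case produces a set of heavy $(2S\delta)$-wide tubes, each $\gtrapprox Sr$-rich in $\B$, with $|T_r(\B)| \lessapprox S^2 |\wt T_{Sr}(\B)|$.

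The iteration then proceeds through coarsened scales $s_k = S^k$. After merging the balls of $\B$ into $s_k\delta$-clusters (at most $\sim s_k^2$ original balls per cluster), the heavy tubes at step $k$ play the role of unit-width tubes at scale $s_k$, and one reapplies Proposition \ref{twopointone} or Lemma \ref{cor21}. Each step contributes a multiplicative loss of $S^2$ plus a $\delta^{-\eps^7}$ approximation factor. The process terminates at some scale $s = s_K \le \delta^{-1}$ either because the thin case finally applies, or because the target richness saturates the trivial ceiling $sr \lessapprox M_s / (s\delta)$, which upon rearrangement is precisely statement (2). Combining the thin-case bound at the terminal scale with the accumulated $S^{2K}$ multiplicity yields statement (1).

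To verify the packing bound $M_s \lessapprox Ns\max(1, sW\delta)$ in (3), I would count the $W^{-1} \times \delta$ cells that meet a fixed $s\delta \times 1$ tube: along its long direction the tube stacks $\sim s$ height-$\delta$ cells, and along its width it spans $\sim \max(1, sW\delta)$ columns of width $W^{-1}$; since each cell contributes at most $N$ balls, (3) follows. The main obstacle will be the bookkeeping: ensuring that the hypothesis of Proposition \ref{twopointone} / Lemma \ref{cor21} transfers cleanly through each rescaling, that the merged clusters still obey the correctly updated richness bound at the next scale, and that the compounded approximation loss through $K = O(\eps^{-1})$ iterations remains $\lesssim \delta^{-20\eps^6}$, comfortably absorbable into the $\lessapprox$ notation for $\eps$ sufficiently small.
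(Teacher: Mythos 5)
Your high-level plan---iterate the thin/thick dichotomy of Proposition \ref{twopointone}, coarsening by a factor $S$ in each thick step---matches the skeleton of the paper's proof, which organizes the same idea as a double induction on $\delta$ and $r$ rather than a forward iteration (a largely cosmetic difference). But there is a genuine gap at the passage from one scale to the next. A heavy tube containing $\gtrapprox Sr$ balls of $\B$ does \emph{not} automatically contain a definite number of coarsened $S\delta$-balls: how many it meets depends entirely on how the $\delta$-balls cluster, and can range from $\gtrapprox r/S$ up to $\gtrapprox Sr$. Your remark that a cluster holds ``at most $\sim s_k^2$ original balls'' is a worst-case multiplicity, and feeding the worst case into the iteration does not close the book-keeping for (1) and (2). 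The paper resolves this with a dyadic pigeonhole that your outline omits: it partitions the $S\delta$-balls into classes $\tilde\B_M$ according to the number $M$ of $\delta$-balls of $\B$ they contain, and extracts a single dyadic $M$ so that a $\gtrapprox 1$-fraction of the heavy tubes are $\gtrapprox Sr/M$-rich in $\tilde\B_M$. The induction hypothesis is then invoked for $\tilde\B_M$ with $\tilde\delta=S\delta$, $\tilde r\approx Sr/M$, $\tilde N=NS/M$, and the output parameter $M_s$ is precisely the accumulated product $M\cdot\tilde M_s$ of these pigeonholed multiplicities. Without isolating $M$, you cannot set up the next step with a well-defined richness, nor even define the $M_s$ appearing in the conclusion.

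Two secondary remarks. Your direct cell count for (3) is in the right spirit---$M_s$ does morally measure the number of $\delta$-balls inside an $s\delta$-ball (not an $s\delta\times 1$ tube, as you wrote), and that count is indeed $\lesssim Ns\max(1,sW\delta)$---but the paper obtains (3) from the recursion $M_s=M\tilde M_s$ together with $\tilde N=NS/M$, since $M_s$ is defined via pigeonholing rather than a priori as a geometric count. Also, your termination discussion is too vague: the paper needs three explicit base cases ($\delta\sim 1$; $r\ge 10\delta^{-1}$, forcing $T_r(\B)=\emptyset$; and crucially $NW\ge\delta^{-1+\eps/2}$, handled by a direct double count of triples $(B_1,B_2,T)$), and the last one is essential because it is what guarantees $W\le(S\delta)^{-1}$ holds whenever the induction hypothesis is invoked at the coarser scale.
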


Let us quickly see how Proposition \ref{thm5.4dual} implies Lemma \ref{highx}.
\begin{proof}[Proof of Lemma \ref{highx}]
Apply Proposition \ref{thm5.4dual} with $N = 1$ to get a scale $s$ and an integer $M_s$. We claim that $sW\de \le 1$. If this is not true, then from \eqref{thm5.4-2} and \eqref{thm5.4-3}, we get
\begin{equation*}
    \de^{-3\e}W\le r \le C_\e\de^{-\e} \frac{M_s\de^{-1}}{s^2}\le C_\e^2 \de^{-2\e} s^2 W\delta \cdot \frac{\delta^{-1}}{s^2} =C_\e^2 \de^{-2\e} W,
\end{equation*}
which is a contradiction when $\de$ is small. Hence, $sW\de \le 1$, and so $M_s \leapp s$ and 
$$|T_r(\B)| \leapp \frac{|\B| \de^{-1}}{r^2} = \frac{|\B| |\B_{\max}|}{W r^2}$$
\end{proof}

Now, it suffices to prove Proposition \ref{thm5.4dual}.
\begin{proof}[Proof of Proposition \ref{thm5.4dual}]
It's convenient to explicitly write down \eqref{thm5.4-1}, \eqref{thm5.4-2} and \eqref{thm5.4-3} as

\begin{equation} \label{ineq1}
    |T_r(\B)| \le C_\e \de^{-\e} \frac{|\B| M_s \de^{-1}}{s r^2},
\end{equation}
\begin{equation} \label{ineq2}
    r \le C_\e \de^{-\e} \frac{M_s \de^{-1}}{s^2},
\end{equation}
\begin{equation}\label{ineq3}
    M_s \le C_\e \de^{-\e} Ns \max(1, s W \de).
\end{equation}

We induct on $\delta$ and $r$. There are three base cases.
\begin{itemize}
    \item $\delta \sim 1$,
    \item $r = 10 \delta^{-1}$,
    \item $NW \ge \delta^{-1+\eps/2}$.
\end{itemize}

The base case $\delta \sim 1$ is true by choosing large constant. The base case $r = 10 \delta^{-1}$ is taken care of by setting $s = 1$ and $M_s = 1$, and note that $|T_r(\B)| = 0$ since a $\delta$-tube contains at most $\delta^{-1}$ many $\de$-balls. For the base case $NW \ge \delta^{-1+\eps/2}$, set $s = \delta^{-1}$ and $M_s = s^2$. Then $r^2 |T_r(\B)|$ counts the number of triples $(B_1, B_2, T)\in \B\times\B\times T_r(\B)$ such that $B_1\cap T$ and $ B_2 \cap T$ are nonempty. For a given $B_1$, there are at most $\delta^{-1}$ many choices for $B_2$ and $\delta^{-1}$ many choices for $T$, hence $r^2 |T_r(\B)| \le |\B| \delta^{-2}$, which is what we want.

For the inductive step, assuming that the proposition holds for the tuple $\{(r,\de):r\ge 2\tilde r,\de=\td\de\}$ and $\{(r,\de):\de \ge 2\tilde \delta\}$, we prove the proposition for $r=\td r, \de=\td\de$. Let $\T\subset T_r(\B)$ be the subset of $\de$-tubes intersecting $\sim r$ balls of $\B$. If $|T_r(\B)|\ge 10 |\T|$, then $ |T_r(\B)|\le \frac{10}{9}|\T_{2r}(\B)| $. Using induction hypothesis to $\de$ and $ 2r$, we find $s$ and $M_s$ such that
\begin{equation*} 
    |T_{2r}(\B)| \le C_\e \de^{-\e} \frac{|\B| M_s \de^{-1}}{s (2r)^2}\Longrightarrow |T_{r}(\B)| \le C_\e \de^{-\e} \frac{|\B| M_s \de^{-1}}{s r^2},
\end{equation*}
\begin{equation*} 
    2r \le C_\e \de^{-\e} \frac{M_s \de^{-1}}{s^2},
\end{equation*}
\begin{equation*}
    M_s \le C_\e \de^{-\e} Ns \max(1, s W \de),
\end{equation*}
which verifies \eqref{ineq1}, \eqref{ineq2} and \eqref{ineq3}.
Hence we assume $|T_r(\B)|\le 10|\T|$.

We apply the rescaled version of Proposition \ref{twopointone} to $\B$ and $\T$. Note that $D=\de^{-1}$, $S=\de^{-\e/20}$. There are two possible cases. We discuss the two cases.

If we are in the thin case, we pick $s = 1, M_s = 1$ and obtain
\begin{gather*}
    |T_r(\B)|\le 10|\T| \le 10 C(\e)\de^{-\e^7}\de^{-\e/10} \frac{|\B| \delta^{-1}}{r^2} \le C_\e \de^{-\e} \frac{|\B| M_s \de^{-1}}{s r^2}\  (C_\e\textup{~large enough}), \\
    r \le 10 \delta^{-1}\le C_\e \de^{-\e} \frac{M_s \de^{-1}}{s^2},
\end{gather*}
which verifies \eqref{ineq1} and \eqref{ineq2}. Also, \eqref{ineq3} is easily verified.

If we are in the thick case, we obtain a set $\tT$ of $\gtrapprox Sr$-rich $S\delta$-tubes that contain $\geapp 1$ of the tubes in $\T$ (these two $``\gtrapprox"$ means $``\ge C(\e)^{-1}\de^{\e^7}"$), which implies
\begin{equation}\label{e1}
    |\T| \le C(\e)\de^{-\e^7} S^2 |\tilde \T|.
\end{equation}

Now we cover the balls in $\B$ using essentially distinct $S\delta$-balls denoted by $\tilde\B$. There is a partition
$$ \tilde\B=\bigsqcup_{M\textup{~dyadic}} \tilde \B_M, $$
where $\tilde\B_M$ are those $S\delta$-balls that contain $\sim M$ balls in $\B$.

We know each $\tilde T\in\tilde \T$ contains $\ge C(\e)^{-1}\de^{\e^7} Sr$ balls in $\B$. For a fixed $\td T$, by dyadic pigeonholing, there exist a dyadic number $M_{\td T}$ such that $\tilde T$ contains $\ge  C'(\e)^{-1}\de^{2\e^7} Sr$ balls in $\td\B_{M_{\td T}}$. By a further dyadic pigeonholing, there exists a dyadic $M$ such that a $C'(\e)^{-1} \de^{\e^7}$-fraction of tubes $\td T$ in $\tilde \T$ satisfy $M_{\td T}=M$, i.e., each of these $\td T$ contains $\ge  C'(\e)^{-1}\de^{2\e^7} Sr$ balls in $\td\B_M$. Now we fix $M$, and just still denote these $S\de$-tubes by $\tilde \T$. From \eqref{e1}, we have
\begin{equation}\label{e2}
   |\T| \le C''(\e)\de^{-2\e^7} S^2 |\tilde \T|. 
\end{equation}

A tube in $\tT$ contains more than $\tR = M^{-1} C'(\e)^{-1}\de^{2\e^7} Sr$ balls in $\tilde\B_M$. Furthermore, a $W^{-1} \times S\delta$ rectangle now contains at most $\tN = M^{-1} NS$ $S\delta$-balls in $\tilde\B_M$ since each $W^{-1} \times S\delta$ contains $S$ many $W^{-1} \times \delta$ rectangles.

Since we are not in the base cases, we assume $NW\le \de^{-1+\e/2}$ which implies $W\le (S\de)^{-1}$ (recall $S=\de^{-\e/20}$).
We can apply the induction hypothesis to 
\begin{equation}\label{induction}
    \tR = M^{-1} C'(\e)^{-1}\de^{2\e^7} Sr,\  \tW = W,\  \tde = S\delta,\ \tN = M^{-1} NS
\end{equation}
and the set of $\tilde \de$-balls $\tilde\B_M$. 
Thus, there exists $1 \le \tilde s \le (S\delta)^{-1}$ and $\td M_s$ such that
\begin{gather}
    |\tilde \T|\le |T_{\tilde r}(\tilde \B_M)| \le
    C_\e \td\de^{-\e} \frac{|\tilde\B_M| \tilde M_s \td\de^{-1}}{\tilde s {\tilde r}^2} \\
    \label{e3}\tR \le
    C_\e \td\de^{-\e} \frac{\tM_s\tde^{-1}}{\ts^2} , \\
    \label{e4}\tM_s \le
    C_\e \td\de^{-\e} \td N\ts \max(1, \ts W\tde).
\end{gather}
Now set $s = S \ts$ and $M_s = M \tM_s$. Combined with \eqref{e2}, we get
\begin{align*}
    |T_r(\B)|\le 10|\T|&\le C''(\e)\de^{-2\e^7}S^2|\td\T| \\
    &\le C''(\e)\de^{-2\e^7}S^2 C_\e \td\de^{-\e} \frac{|\tilde\B_M| \tilde M_s \td\de^{-1}}{\tilde s {\tilde r}^2}\\
    & = \big( C''(\e)C'(\e)^2\de^{-6\e^7}S^{-\e} \big)C_\e \de^{-\e}\frac{M|\td\B_M| M_s \de^{-1}}{s r^2}.
\end{align*}
Recall $S=\de^{-\e/20}$, so when $\de$ is small enough, $\big( C''(\e)C'(\e)^2\de^{-6\e^7}S^{-\e} \big)\le \frac{1}{10}$. Also, from the definition of $\td\B_M$, we have $M|\td\B_M|\le 2|\B|. $ Thus we have
$$ |T_r(\B)|\le C_\e \de^{-\e}\frac{|\B| M_s \de^{-1}}{s r^2}, $$
which closes the induction for \eqref{ineq1}.

From \eqref{induction} and \eqref{e3}, we have
\begin{align*}
    r &= C'(\e)\de^{-2\e^7} MS^{-1} \tR \\
    &\le C'(\e)\de^{-2\e^7} MS^{-1}  C_\e \td\de^{-\e} \frac{\tM_s\tde^{-1}}{\ts^2}\\
    &= \big( C'(\e)\de^{-2\e^7}S^{-\e} \big)C_\e\de^{-\e}\frac{M_s\de^{-1}}{s^2}\\
    &\le C_\e\de^{-\e}\frac{M_s\de^{-1}}{s^2}
\end{align*}
when $\de$ is small enough. This close the induction for \eqref{ineq2}.

From \eqref{e4}, we have
\begin{align*}
    M_s=M\td M_s &\le M C_\e \td\de^{-\e} \td N\ts \max(1, \ts W\tde)\\
    &=  C_\e (S\de)^{-\e} Ns \max(1, s W \de) \\ 
    &\le  C_\e \de^{-\e} Ns \max(1, s W \de),
\end{align*}
which close the induction for \eqref{ineq3}.

This completes the inductive step and thus finishes the proof.
\end{proof}

\begin{remark}
It is not clear to us whether Proposition \ref{thm5.4dual} follows from \cite{demeter2020small} Theorem 5.4. Actually, we only know Proposition \ref{thm5.4dual} implies \cite{demeter2020small} Theorem 5.4. 

Let us try to prove ``Theorem 5.4 in \cite{demeter2020small} $\Rightarrow$ Proposition \ref{thm5.4dual}", and see where is the gap.
Suppose we are given a set of $\de$-balls $\B$ satisfying the spacing condition in Proposition \ref{thm5.4dual}, and we want to estimate $|T_r(\B)|$. Following the notation in Section \ref{dualitysec}, we assume that $\B$ and $T_r(\B)$ lie in $\Pi_2$. We want to apply the tube-ball duality as in Section \ref{tubeball}, so that $\B\rightarrow l_1(\B)$ becomes $\de$-tubes in $\Pi_1$ and $T_r(\B)\rightarrow l_2^{-1}(T_r(\B))$ becomes $\de$-balls in $\Pi_1$. Note that $l_1(\B)$ inherits the spacing condition from $\B$ which meets the requirement in \cite{demeter2020small} Theorem 5.4. Also since the incidence are preserved, we have $l_2^{-1}(T_r(\B))$ is just $B_r(l_1(\B))$, the $r$-rich balls with respect to $l_1(\B)$. It seems we can use \cite{demeter2020small} Theorem 5.4, but the only shortage of this argument is that: $l_2^{-1}$ is not defined on all the tubes in $T_r(\B)$, but only defined on $T_r(\B)\cap \T_2$ (recall the definition of $\T_2$ in Section \ref{tubeball}). So, Theorem 5.4 in \cite{demeter2020small} actually implies the estimate
$$ |T_r(\B)\cap \T_2|\lessapprox \frac{|\B|M_s\de^{-1}}{s r^2},  $$
instead of \eqref{thm5.4-1}. What if we use a set of rotations $\{\rho_k\}_{k\le 100}$ as in Section \ref{subsec3.3} to partition the $r$-rich tubes $T_r(\B)=\cup_{k=1}^{100} T_r(\B)\cap \rho_k(\T_2)$, and estimate each $T_r(\B)\cap \rho_k(\T_2)$ independently? We see that $\rho_k^{-1}\big(T_r(\B)\cap \rho_k(\T_2)\big)=T_r(\rho_k^{-1}(\B))\cap\T_2$. Now we can apply duality so that the question becomes to estimate the $r$-rich $\de$-balls with respect to $l_1\big(\rho_k^{-1}(\B)\big)$. Let us explain the trouble. The original $\B$ are arranged in $W^{-1}\times \de$-rectangles whose edges are parallel to the axes, so $l_1(\B)$ satisfies the spacing condition in Theorem 5.4 \cite{demeter2020small}; but after rotation, $\rho_k^{-1}(\B)$ are arranged in tilted rectangles, so the dual tubes $l_1(\rho_k^{-1}(\B))$ no longer satisfies the spacing condition in Theorem 5.4 \cite{demeter2020small}. We remark that when $\rho_k$ is a $90^\circ$-rotation, then $\rho_k^{-1}(\B)$ are arranged in $\de\times W^{-1}$-rectangles whose shortest edges are now parallel to $x$-axis (originally were parallel to $y$-axis). In this case, $l_1(\rho_k^{-1}(\B))$ satisfies some spacing condition similar to Theorem \ref{main3} with $X=\de^{-1}$.

\end{remark}

\subsection{Proof of Theorem \ref{main}}
In this subsection, we prove Theorem \ref{main}. Let us recall Theorem \ref{main} here.

\begin{theorem}\label{mainn}
Let $1 \le W \le X \le \delta^{-1}$. Divide $[0, 1]^2$ into $W^{-1} \times X^{-1}$ rectangles as in Figure \ref{fig:generalcase}. Let $\B$ be a set of $\delta$-balls with at most one ball in each rectangle. 

We denote $|\B_{\max}|:=WX$ (as one can see that $\B$ contains at most $\sim WX$ balls).
Then for $r > \max(\delta^{1-2\e} |\B_{\max}|, 1)$, the number of $r$-rich tubes is bounded by
\begin{equation}\label{est}
    |T_r (\mathbb{B})| \le C_\e \delta^{-\e} |\B| |\B_{\max}| \cdot r^{-2} (r^{-1} + W^{-1}).
\end{equation}
\end{theorem}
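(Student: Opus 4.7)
The plan is to establish Theorem~\ref{mainn} by an iteration argument combining Lemma~\ref{cor21} (the thin/thick dichotomy from Proposition~\ref{twopointone}) with Lemma~\ref{highx} (the base case for $X \sim \delta^{-1}$). The iteration proceeds up the dyadic scale ladder between $\delta$ and $X^{-1}$, applying the dichotomy at each scale.

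Concretely, set $S = \delta^{-\eps/20}$ and define $\delta_k = S^k \delta$, $r_k = S^k r$ for $k \ge 0$. At step $k$, the goal is to bound $|T_{r_k}(\B)|$, the number of $r_k$-rich $\delta_k$-tubes (richness measured with respect to the original $\B$). We consolidate $\B$ into a set $\B_k$ of essentially distinct $\delta_k$-balls and dyadically pigeonhole so that each $\B_k$-ball covers approximately $M_k$ balls of $\B$; thus $|\B_k|\, M_k \sim |\B|$, and an $r_k$-rich tube with respect to $\B$ is $(r_k/M_k)$-rich with respect to $\B_k$. After rescaling by $\delta_k^{-1}$, the set $\B_k$ becomes (essentially) unit balls in $[0, \delta_k^{-1}]^2$, and Lemma~\ref{cor21} gives the recursion
\[
|T_{r_k}(\B)| \lessapprox S^2 \Bigl( (r_k / M_k)^{-2}\, |\B_k|\, \delta_k^{-1} + |T_{r_{k+1}}(\B)| \Bigr).
\]

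We terminate the iteration at the step $K$ with $\delta_K \sim X^{-1}$, so $K \sim \log_S \tfrac{1}{X\delta}$. At this scale, the spacing assumption on $\B$ translates to at most one $\B_K$-ball per $W^{-1} \times \delta_K$ rectangle, which is precisely the hypothesis of Lemma~\ref{highx}. Invoking that lemma bounds $|T_{r_K}(\B_K)| \lessapprox |\B_K|\, X/r_K^2$, and after accounting for the accumulated factor $S^{2K} \sim (X\delta)^{-2}$ and the fact that $M_K \sim 1$ so $|\B_K| \sim |\B|$, this contributes the bush term $\delta^{-\eps}\, |\B|\, X/r^2$ to the final bound. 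The remaining Szemer\'edi–Trotter-type term $\delta^{-\eps}\, |\B|\, WX/r^3$ is expected to emerge from the sum of the thin-case contributions across the intermediate scales. The key structural observation is that $M_k$ transitions from $M_k \sim 1$ (when $\delta_k < X^{-1}$) to $M_k \sim \delta_k X$ (when $\delta_k \ge X^{-1}$), and the interplay of this growth with the accumulated $S^{2(k+1)}$ factors and the diminishing effective richness $r_k/M_k$ is what produces the $WX/r^3$ structure.

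The hard part will be the careful bookkeeping: verifying that the sum of thin-case contributions, once properly weighted by $S^{2(k+1)}$ and by $M_k$, telescopes into $|\B|\, WX/r^3$ without exceeding it, while the terminal application of Lemma~\ref{highx} produces precisely the bush term. A related subtlety is the richness transfer between $\B$ and $\B_k$ (the factor $M_k$), which must be tracked correctly through every application of Lemma~\ref{cor21}, together with the lower-bound condition $r > \delta^{1-2\eps}\, |\B_{\max}|$ that guarantees the iteration does not break down near the endpoint and that no $r_k$-rich tubes survive beyond scale $\delta_k \sim W^{-1}$.
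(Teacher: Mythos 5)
There is a genuine gap at the heart of your plan: you bound each thin-case contribution by the trivial estimate $(r_k/M_k)^{-2}|\B_k|\delta_k^{-1}$ from Lemma~\ref{cor21} and hope these sum to the Szemer\'edi--Trotter term $|\B|\,WX\,r^{-3}$. They do not. Already the finest-scale term ($k=0$, where $M_0\sim 1$ and $\B_0=\B$) is $\approx S^2\,|\B|\,\delta^{-1}r^{-2}$, and the theorem's target is $|\B|\,WX\,r^{-3}+|\B|\,X\,r^{-2}$; the ratio of the first to the second is $\approx \delta^{-1}/(WX r^{-1}+X)$, which is a large power of $\delta^{-1}$ in most of the admissible range. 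For instance, with $W=1$, $X=\delta^{-1/2}$, $r=\delta^{-1/4}$ (so $r>\delta^{1-2\eps}WX$ holds), the target is $\approx|\B|$ while your $k=0$ thin term is $\approx|\B|\,\delta^{-1/2}$. Since your scheme outputs exactly the sum of these thin terms plus the terminal term, it can only prove a bound that is weaker than \eqref{est} by such a factor; no amount of bookkeeping in the weights $S^{2(k+1)}$ or in the multiplicities $M_k$ repairs this, because the loss is already present before any iteration. (Your terminal step is fine in spirit: at scale $\delta_K\sim X^{-1}$ the accumulated $S^{2K}\sim(X\delta)^{-2}$ indeed cancels against $r_K^{-2}$ and Lemma~\ref{highx} produces the bush term $|\B|X r^{-2}$; the paper instead invokes Lemma~\ref{highx} only in the base case $X\ge\delta^{-1+\eps/2}$ of an induction on $\delta$.)

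The missing idea is that the thin term must itself be treated recursively rather than by a ball count. In the paper's argument one fixes $D=\delta^{-\eps^4}$, cuts each rich tube into $\delta\times D^{-1}$ segments, and pigeonholes the richness as $r\approx ME$, where each segment in $\U_M$ contains $\sim M$ balls and each tube contains $\sim E$ such segments. Lemma~\ref{cor21} is then applied with richness $E$, so the thin term is $S^2E^{-2}|\U_M|D$, and the crucial point is that $|\U_M|$ --- the number of $M$-rich $\delta\times D^{-1}$ segments --- is \emph{not} bounded trivially but by the induction hypothesis: inside each $D^{-1}$-square the configuration rescales to the same theorem at scale $D\delta$ with spacing parameters $W'=W/D$, $X'=X/D$ and richness $r'=M$ (the hypothesis $r>\delta^{1-2\eps}WX$ is exactly what makes $M\ge\max((D\delta)^{1-2\eps}W'X',2)$ checkable). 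It is this second, ``interior'' application of the theorem, combined with the thick-case application at scale $S\delta$ with richness $\gtrapprox Sr$, that generates the $WXr^{-3}$ structure with a $D^{-\eps/2}$ (resp.\ $S^{-\eps}$) gain to close the induction. Your proposal contains only the thick-direction recursion; to make it work you would need to add the segment pigeonholing $r\approx ME$ and the per-cell induction on $|\U_M|$, as well as the separate base cases ($r\lesssim\delta^{-\eps/4}$ or $W\lesssim\delta^{-\eps/4}$, handled by double counting, and $X\ge\delta^{-1+\eps/2}$, handled by Lemma~\ref{highx}).
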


The proof has the same idea as Theorem 4.1 in \cite{guth2019incidence}. There are three base cases. 
\begin{itemize}
    \item $r \ge 10 \delta^{-1}$
    \item $X \ge \delta^{-1+\eps/2}$
    \item $r \lesim \delta^{-\eps/4}$ or $W \lesim \delta^{-\eps/4}$
\end{itemize}
In the first base case $r \gesim \delta^{-1}$ we have $T_r (\B)$ is empty. The second base case $X \ge \delta^{-1+\eps/2}$ is dealt with by Lemma \ref{highx}. Actually, Lemma \ref{highx} (with $\e/2$ in place of $\e$) implies 
$$|T_r (\mathbb{B})| \le C_\e \delta^{-\e/2} |\B| W\de^{-1} \cdot r^{-2} W^{-1}\le C_\e \delta^{-\e} |\B| |\B_{\max}| \cdot r^{-2} (r^{-1} + W^{-1}),$$
where we use $|\B_{\textup{max}}|=WX\ge W\de^{-1+\e/2}$.

For the third base case $r \lesim \delta^{-\eps/4}$ or $W \lesim \delta^{-\eps/4}$, we use a double counting argument similar to \cite{ren2020incidence}. We count the number of triples $(B_1,B_2,T)\in \B\times\B\times T_r(\B)$ such that $B_1\cap T$ and $B_2\cap T$ are nonempty. Fix a ball $B_1\in \B$. For any dyadic radius $w$ $(X^{-1}\le w\le 1) $, consider those balls $B_2$ that are at distance $\sim w$ from $B_1$. The number of those $B_2$ is $\lesim wX(1 + wW)$. Also note that for two balls $B_1, B_2$ with distance $\sim w$, there are $\lesim \frac{1}{w}$ many tubes $T$ intersecting them. Thus, the number of triples is 
\begin{equation}
    \lesim |\B|\sum_{\begin{subarray}{c}X^{-1}\le w\le 1\\w\textup{~dyadic}\end{subarray}}wX(1+wW)\frac{1}{w}\lesssim (\log X)|\B| W X.
\end{equation}

Also, the number of triples has a lower bound $r^2 |T_r(\B)|$. Combining these two bounds, we get
\begin{equation*}
     |T_r(\B)| \lesssim (\log X)\frac{|\B| WX}{r^2} =(\log X) \frac{|\B| |\B_{\max}|}{r^2}\le C_\e \de^{-\e}  \frac{|\B| |\B_{\max}|}{r^2}(r^{-1}+W^{-1}).
\end{equation*}
which gives the estimate \eqref{est}.

For the inductive step, assuming that the theorem holds for the tuple $\{(r,\de):r\ge 2\tilde r,\de=\td\de\}$ and $\{(r,\de):\de \ge 2\tilde \delta\}$, we prove the theorem for $r=\td r, \de=\td\de$. In the rest of the proof, $``\leapp"$ will mean $``\le C(\e)\delta^{-O(\eps^7)}$". 

From the base case, we have $W\gtrsim \de^{-\e/4}$. We define $D = \delta^{-\eps^4}$, then $1 \le D \le W$. 
Cover the unit square with finitely overlapping $D^{-1}$-squares $\cQ=\{Q\}$. Let $\T\subset T_r(\B)$ be the set of tubes intersecting $\sim r$ balls from $\B$, and by induction we just need to consider the case \begin{equation}\label{f-1}
    |T_r(\B)|\le 10 |\T|,
\end{equation} 
as we did in the proof of Proposition \ref{thm5.4dual}.

A tube $T\in\T$ intersects $Q\in\cQ$ in a tube segment $U_D$ of dimensions $\delta \times D^{-1}$. Note that one $U_D$ can be essentially contained in many tubes $T \in \T$.
For dyadic $1 \le M \le \delta^{-1}$, let $\U_M$ be the set of essentially distinct tube segments $U_D$ which essentially contain $\sim M$ balls of $\B$. Then we have \begin{equation}\label{f0}
    \sum_{M \text{ dyadic}} MI(\U_M, \T) \sim I(\B, \T).
\end{equation}
Here $I(\U_M, \T)$ is the number of tuples $(U,T)\in \U_M\times \T$ such that $U$ is essentially contained in $T$.
We may choose a dyadic $M$ satisfying
\begin{equation}\label{f1}
    M I(\U_M, \T) \geapp I(\B, \T).
\end{equation}
Next, let $\T_E\subset \T$ be the set of tubes that contain $\sim E$ tube segments $U_D\in\U_M$. Since $\sum_{E \text{ dyadic}} I(\U_M, \T_E) \geapp I(\U_M, \T)$, we may choose $E$  satisfying
\begin{equation}\label{f2}
    MI(\U_M, \T_E) \geapp I(\B, \T).
\end{equation}

Note that we have $I(\B,\T_E)\gtrsim MI(\U_M,\T_E)$, together with \eqref{f2} to obtain $I(\B,\T_E)\geapp I(\B, \T)$. Since each tube in $\T$ contains $\sim r$ balls of $\B$ by definition, we get 
\begin{equation}\label{f3}
    |\T_E| \geapp |\T|.
\end{equation}

Since each $T\in\T_E$ contains $\sim E$ tube segments $U_D\in\U_M$ and each $U_D\in \U_M$ contains $\sim M$ balls in $\B$, we have each $T\in\T_E$ contain $\gtrsim ME$ balls in $\B$. On the other hand, every tube in $\T$ contains $\sim r$ balls in $\B$ by definition. So, we have
\begin{equation}\label{f4}
    r\gtrsim ME.
\end{equation}
Also note that from \eqref{f2}, we have
\begin{equation*}
    r |\T| \sim I(\B, \T) \leapp M I(\U_M, \T_E) \leapp ME |\T_E|\le ME |\T|,
\end{equation*}
which implies
\begin{equation}\label{f5}
    r\lessapprox ME.
\end{equation}

Now we apply a rescaled version of Lemma \ref{cor21} with $\U=\U_M$ and $r=E$ to get
\begin{equation}
    |\T_E| \lessapprox S^2 (E^{-2} |\U_M| D + |\wt T_{\td r}(\U_M)|):=\uppercase\expandafter{\romannumeral1}+
\uppercase\expandafter{\romannumeral2}.
\end{equation}
Here, $S=D^{\e/20}$, $\td r\gtrapprox SE$. $\wt T_{\td r}(\U_M)$ is the set of $2 S\de\times 1$-tubes that contain at least $\td r$ rectangles from $\U_M$.

We would like to rewrite the second term a little bit. Note that by definition each $U_D\in\U_M$ contain $\sim M$ balls in $\B$, so $\wt T_{\td r}(\U_M)\subset \wt T_{\td r M}(\B)$. Here $\wt T_{\td r M}(\B)$ is the set of $2 S\de\times 1$-tubes that contain at least $\td r M$ balls from $\B$. We have
\begin{equation}\label{mainest}
    |\T_E| \lessapprox S^2 (E^{-2} |\U_M| D + |\wt T_{r_1 }(\B)|):=\uppercase\expandafter{\romannumeral1}+
\uppercase\expandafter{\romannumeral2},
\end{equation}
where $r_1=\td r M\gtrapprox SEM\gtrapprox Sr$.

\subsubsection{Estimate of \uppercase\expandafter{\romannumeral1}}

Fix a $D^{-1}$-square $Q\in\cQ$ in our finitely overlapping covering. Also, we consider the set of tubes $\U_Q:=\{U\in\U_M: U\subset Q\}$ and the set of balls $\B_Q:=\{B\in\B: B\subset Q\}$. If we rescale $Q$ to $[0,1]^2$, then $\U_Q$ becomes a set of $D\de$-tubes and $\B_Q$ becomes a set of $D\de $-balls. Meanwhile, $\B_Q$ satisfies the spacing condition in Theorem \ref{mainn}.
We use the induction hypothesis of Theorem \ref{mainn} to $\B_Q$ with
\begin{enumerate}
    \item $\de' = D\delta$,
    
    \item $r' = M$,
    
    \item $W' = W/D$, $X' = X/D$.
\end{enumerate}
In order to apply the induction hypothesis, we need to check $r'=M >\max(\de'^{1-2\e}W'X',1)$. Actually, by \eqref{f5} and noting $E\le D, r>\delta^{1-2\e} WX, D=\de^{-\e^4}$, we have
\begin{equation*}
    M \ge C(\e)^{-1}\de^{\e^7} E^{-1} r \ge C(\e)^{-1}\de^{\e^7} D^{-1} \delta^{1-2\e} WX \ge  \de'^{1-2\e} W' X'.
\end{equation*}
To check $M \ge 2$, by the base case $r \ge \delta^{-\eps/4}$ is big, and $E\le D = \delta^{-\eps^4}$ is small, so \eqref{f5} implies $M\ge 2$.

Now we can apply induction. From \eqref{est}, we obtain:
$$|\U_Q|\le C_\e (D\de)^{-\e} |\B_Q|D^{-2}W X\cdot M^{-2}(M^{-1}+D W^{-1}).$$
Summing over $Q\in\cQ$ we get
\begin{align*}
    \uppercase\expandafter{\romannumeral1}=S^2 E^{-2} D |\U_M|&=S^2 E^{-2} D \sum_{Q}|\U_Q|\\
    &\le S^2 E^{-2} D \cdot \sum_Q C_\e (D\de)^{-\e} |\B_Q|D^{-2}W X\cdot M^{-2}(M^{-1}+D W^{-1})\\
    &=S^2 D^{-\e} C_\e \de^{-\e} \sum_Q |\B_Q| WX (ME)^{-2} ((MD)^{-1}+W^{-1}) 
\end{align*}
Since $S=D^{\e/20}$, $\sum_Q|\B_Q|=|\B|$, $ME\gtrapprox r$, $E\le D$, we have
\begin{equation}\label{term1}
    \uppercase\expandafter{\romannumeral1}\lessapprox  D^{-\e/2} C_\e \de^{-\e} |\B| WX r^{-2} (r^{-1}+W^{-1}).
\end{equation} 
Recall $``\lessapprox"$ means $\le C(\e) \de^{-\e^7}$.

\subsubsection{Estimate of \uppercase\expandafter{\romannumeral2}}
For the second term in \eqref{mainest}, we have a collection of $2S\delta$-tubes $\wt T_{r_1}(\B)$, each of which intersects $r_1 \geapp Sr$ balls of $\B$. Let $\wt \B$ be the set of balls formed by thickening each $\delta$-ball of $\B$ to a $S\delta$-ball. From the base case, we have $X\le (S\de)^{-1}$, so $\wt \B$ is a set of $S\de$-separated balls satisfying the spacing condition in Theorem \ref{mainn}.

Apply the induction hypothesis to $\wt \B$ with $\delta' = S\delta$, $r'=r_1\gtrapprox Sr$, $W' = W$, $X' = X$ (it is easy to check $r'> \max(\de'^{1-2\e}W' X',1)$.
We obtain from \eqref{est} 
$$ |\wt T_{r_1}(\wt\B)|\le C_\e (S\de)^{-\e} |\B|WX\cdot r_1^{-2}(r_1^{-1}+W^{-1}). $$
So, we have
\begin{align}
\nonumber\uppercase\expandafter{\romannumeral2}=S^2 |\wt T_{r_1}(\B)| &\le S^2 C_\e (S\de)^{-\e} |\B|WX\cdot r_1^{-2}(r_1^{-1}+W^{-1})\\
\nonumber&\lessapprox S^2 C_\e (S\de)^{-\e} |\B|WX\cdot (Sr)^{-2} ((Sr)^{-1} + W^{-1})  \\
\label{term2}&\le S^{-\e}C_\e \de^{-\e} |\B|WX\cdot r^{-2} (r^{-1} + W^{-1}) 
\end{align}
Combining \eqref{f-1}, \eqref{f3}, \eqref{term1}, \eqref{term2}, and recalling $D=\de^{-\e^4}, S=D^{\e/20}$, we have
$$ |T_r(\B)|\lessapprox |\T_E|\lessapprox \de^{\e^6/20}C_\e \de^{-\e} |\B|WX\cdot r^{-2} (r^{-1} + W^{-1}).  $$
Recall $``\lessapprox"$ means $\le C(\e) \de^{-O(\e^7)}$. We see if $\de$ is small enough, this closes the induction for \eqref{est}.

\section{Proof of Theorem \ref{thmfur}} \label{fursec}
In this section we prove Theorem \ref{thmfur} which we recall here.

\begin{theorem}\label{fur}
Let $1 \le W \le X \le \delta^{-1}$. Let $\T$ be a collection of essentially distinct $\delta$-tubes in $[0,1]^2$ that satisfies the following spacing condition: every $W^{-1}$-tube contains at most $\frac{X}{W}$ many tubes of $\T$, and the directions of these tubes are $\frac{1}{X}$-separated. We also assume $|\T|\sim XW$.

Assume for each $T$ there is a set of $\de$-balls $Y(T)=\{B_\de\}$ satisfying: each ball in $Y(T)$ intersects $T$; $\#Y(T)\sim \de^{-\alpha}$ and the balls in $Y(T)$ have spacing $\gtrsim \de^{\alpha}$. 
Define the union of these $\de$-balls to be $\B=\cup_T Y(T)$.
Then we have the estimate
\begin{equation}
    |\B|\gtrsim (\log\de^{-1})^{3.5}\min(\de^{-\alpha-1},\de^{-\frac{3}{2}\alpha}(XW)^{\frac{1}{2}},\de^{-\alpha}XW).
\end{equation}
\end{theorem}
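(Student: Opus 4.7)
The plan is to combine a direct incidence count with the upper bound of Theorem \ref{main2}, driven by a dyadic pigeonhole over richness. First I would lower-bound the total incidences
\[ I(\B, \T) := \#\{(B,T) \in \B \times \T : B \cap T \neq \emptyset\} \;\geq\; \sum_{T \in \T} |Y(T)| \;\gtrsim\; XW\delta^{-\alpha}, \]
using only $|Y(T)| \sim \delta^{-\alpha}$ and $|\T| \sim XW$. Next I would partition $\B = \bigsqcup_{r \textup{ dyadic}} \B_r$ by richness, where $\B_r$ denotes the balls in $\B$ meeting $\sim r$ tubes of $\T$. Since $\sum_r r|\B_r| \sim I(\B, \T)$ and there are $O(\log\delta^{-1})$ dyadic scales $r \in [1,\delta^{-1}]$, a pigeonhole produces a single scale $r_0$ with $r_0 |\B_{r_0}| \gtrsim XW\delta^{-\alpha}/\log\delta^{-1}$, and hence
\[ |\B| \geq |\B_{r_0}| \gtrsim \frac{XW\delta^{-\alpha}}{r_0 \log\delta^{-1}}. \]
Thus any upper bound on $r_0$ translates directly into a lower bound on $|\B|$.

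To bound $r_0$ I would feed the lower bound on $|\B_{r_0}|$ into Theorem \ref{main2}: whenever $r_0 > \delta^{1-2\epsilon}|\T_{\max}|$,
\[ |\B_{r_0}| \lesssim \delta^{-\epsilon}(XW)^2\, r_0^{-2}(r_0^{-1} + W^{-1}), \]
so matching the two estimates pins down $r_0$. The three terms of the claimed minimum then emerge from three regimes. If $r_0 \leq \delta^{1-2\epsilon}|\T_{\max}|$ (where Theorem \ref{main2} is inapplicable), the pigeonhole lower bound alone gives $|\B| \gtrsim \delta^{-\alpha-1}$. If $r_0$ lies in the intermediate range where the $r_0^{-3}$ branch of Theorem \ref{main2} is the binding term, matching upper and lower bounds forces $r_0 \lesssim (XW\delta^\alpha)^{1/2}$, producing $|\B| \gtrsim (XW)^{1/2}\delta^{-3\alpha/2}$. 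Finally, if $r_0 \sim 1$, the pigeonhole itself already yields $|\B| \gtrsim XW\delta^{-\alpha}$.

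The main obstacle I foresee is the regime $r_0 > W$, where the $W^{-1}$ branch of Theorem \ref{main2} is active and the upper bound degrades to $|\B_{r_0}| \lesssim X^2 W/r_0^2$; a naive matching there only produces a $W\delta^{-2\alpha}$-type estimate, which is not among the three listed terms of the minimum. I will therefore need to argue carefully that this case is already subsumed by one of the three regimes above, perhaps by comparing it with the $r_0 \leq W$ branch or by a multi-scale reduction at scale $W^{-1}$ that exploits the prescribed spacing of $\T$. I will also have to track the polylogarithmic losses from the several dyadic pigeonholes (on $r$, on the intermediate scale, and on pairs within tubes) in order to recover the stated $(\log \delta^{-1})^{3.5}$ factor, which suggests that roughly three to four pigeonhole steps enter the final argument.
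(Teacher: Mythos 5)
Your proposal is essentially the argument the authors carry out in Section \ref{fursec} under the heading ``A try using incidence estimates,'' and they include it precisely to show that it falls short: running the incidence count through Theorem \ref{main2} with a dyadic pigeonhole in $r$ yields $|\B|\gtrapprox \min(\de^{-\alpha-1},\de^{-\frac32\alpha}(XW)^{1/2},\de^{-\alpha}XW,\de^{-2\alpha}W)$, and the fourth term is exactly the obstacle you flag in the regime $r_0>W$. This is not a technicality you can argue away inside the incidence framework. Theorem \ref{main2} is sharp in the range $W<r<X$ (the bush example of Section \ref{egsec}, Case 2), and the extremizing configuration has each tube's rich balls clustered in a $\frac{X}{r}\de$-neighborhood of a single point --- a configuration that the Furstenberg hypothesis (balls of $Y(T)$ spaced $\gtrsim\de^{\alpha}$ along $T$) forbids, but which your reduction cannot see, because passing to the incidence count $I(\B,\T)\gtrsim \de^{-\alpha}|\T|$ uses only $\#Y(T)\sim\de^{-\alpha}$ and discards the separation. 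Concretely, in the main case $W=1$, $X=\de^{-1}$ of Question \ref{question}, your matching gives only $|\B|\gtrsim\de^{-2\alpha}$ (up to losses), which is strictly weaker than the claimed $\de^{-\frac32\alpha-\frac12}$ for every $\alpha<1$; and the proposed ``multi-scale reduction at scale $W^{-1}$'' has nothing to bite on when $W=1$. (A secondary issue: Theorem \ref{main2} carries $\de^{-\e}$ losses, so even where it applies you would not recover a purely polylogarithmic factor.)

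The paper's actual proof takes a different route: it exploits the $\de^{\alpha}$-separation directly via a Sz\'ekely-type crossing-number argument. After reducing to pseudo-tubes and pigeonholing a typical gap scale $d=\de^{\beta}$ (this is where the $(\log\de^{-1})$ powers arise), one builds a graph whose vertices are the centers of the squares of $\B$ and whose edges join consecutive squares of $Y'(T)$ along each tube; each pair of tubes contributes at most one crossing, so $cr(G)\le|\T|^2$, and Lemma \ref{crossing} gives $|\B|\gtrsim\min(|E|,|E|^{3/2}/|\T|)$. The key input is the lower bound $|E|\gtrsim(\log\de^{-1})^{-2}\de^{-\alpha}|\T|$, proved by bounding the edge multiplicities $n_e$ through an angular decomposition that uses the spacing condition on $\T$ (valid when $XW\lesssim\de^{-2+\alpha}$), and the complementary case $XW\gtrsim\de^{-2+\alpha}$ is handled by discarding tubes to reduce $XW$ to $\sim\de^{-2+\alpha}$. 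If you want to salvage your plan, you would have to strengthen the incidence input so that it remembers the along-tube separation of the balls; as stated, the $r_0>W$ case is a genuine gap.
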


\begin{remark}
As we discussed in Section \ref{dualitysec}, it's more intuitive to view the tubes $\T$ in the theorem through their corresponding dual balls in the dual space. Actually, we see that the dual balls of $\T$ have the configuration as in Figure \ref{fig:generalcase}.
\end{remark}

\subsection{A try using incidence estimates}
It seems we can use Theorem \ref{main2} to study Theorem \ref{fur}. We discuss this here and will see where we fail.

Since the spacing condition for $\T$ is the same in Theorem \ref{main2} and Theorem \ref{fur}, we can use the incidence estimate in Theorem \ref{main2}. We denote by $I(\B,\T)$ the incidence between $\T$ and $\B$. Since each tube intersects $\ge \de^{-\alpha}$ balls in $\B$, we simply get the lower bound for incidence:
$$ I(\B,\T)\gtrsim \de^{-\alpha}|\T|. $$
For the upper bound, since $I(\B,\T)\lesim \sum_{r\textup{~dyadic}}r |B_r(\T)|$, there exists a dyadic $r$ such that
$$ I(\B,\T)\lessapprox r|B_r(\T)| $$
(In this subsection $``\lessapprox"$ means $``\le C_\e \de^{-\e}"$ for any $\e>0$).
So, we have
\begin{equation}\label{fur1}
   \de^{-\alpha}|\T|\lessapprox r|B_r(\T)|.
\end{equation}

Our estimates will be based on \eqref{fur1}. When $r\lessapprox \max(\de |\T|,1)$, we have $\de^{-\alpha}|\T|\lessapprox r|B_r(\T)|\lessapprox \max(\de|\T|,1) |\B|$, which implies that 
\begin{equation}\label{fur2}
    |\B|\gtrapprox \min(\de^{-\alpha-1},\de^{-\alpha}XW).
\end{equation}
When $\max(\de|\T|,1)\lesim r\le W$, by Theorem \ref{main2} we have
$|B_r(\T)|\lessapprox |\T|^2 r^{-3}.$
Combined with the trivial bound $|B_r(\T)|\le |\B|$, we get $\de^{-\alpha}|\T|\lessapprox r \min(|\T|^2r^{-3},|\B|)\le |\T|^{2/3}|\B|^{2/3}$, and hence
\begin{equation}\label{fur3}
    |\B|\gtrapprox \de^{-\frac{3}{2}\alpha}|\T|^{1/2}\gtrsim \de^{-\frac{3}{2}\alpha}(XW)^{1/2}.
\end{equation}
When $r\ge W$, by Theorem \ref{main2} we have
$|B_r(\T)|\lessapprox |\T|^2 r^{-2}W^{-1}.$
Combined with the trivial bound $|B_r(\T)|\le |\B|$, we get $\de^{-\alpha}|\T|\lessapprox r \min(|\T|^2r^{-2}W^{-1},|\B|)\le |\T||\B|^{1/2}W^{-1/2}$, and hence
\begin{equation}\label{fur4}
    |\B|\gtrapprox \de^{-2\alpha}W.
\end{equation}
Combining \eqref{fur2}, \eqref{fur3} and \eqref{fur4}, we obtain
\begin{equation}
    |\B|\gtrapprox \min (\de^{-\alpha-1},\de^{-\frac{3}{2}\alpha}(XW)^{1/2}, \de^{-\alpha}XW, \de^{-2\alpha}W).
\end{equation}
In the above inequality, we see that the fourth term $\de^{-2\alpha}W$ is not good in the case when $W=1$ and $X=\de^{-1}$. This is our main enemy, which is exactly the case of Question \ref{question}.

\subsection{The proof of Theorem \ref{fur}}
We will use a graph-theoretic proof whose idea dates back to \cite{szekely1997crossing}.
First we discuss our main tool: crossing number. For a graph $G$, the crossing number $cr(G)$ of $G$ is the lowest number of edge crossings of a plane drawing of the graph $G$. We have the following well-known result for crossing numbers. A detailed discussion can be found in \cite{guth2016polynomial}.
\begin{lemma}[Crossing number]\label{crossing}
For a graph $G$ with $n$ vertices and $e$ edges, we have
\begin{equation}
    n\gtrsim \min (e,\frac{e^{3/2}}{cr(G)^{1/2}}).
\end{equation}
\end{lemma}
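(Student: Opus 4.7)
The plan is to derive the crossing number inequality via the classical two-step argument (Leighton / Ajtai--Chv\'atal--Newborn--Szemer\'edi, with the probabilistic finish due to Sz\'ekely). Step one is a linear bound coming from Euler's formula, and step two is a random-sampling boost to the $3/2$-power bound.

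First, I would prove the linear crossing lemma: for every graph $G$ with $n$ vertices and $e$ edges,
\begin{equation*}
    cr(G) \ge e - 3n.
\end{equation*}
The key input is Euler's formula, which implies that any simple planar graph with $n \ge 3$ vertices satisfies $e \le 3n - 6 \le 3n$. If $cr(G) = k$, fix an optimal plane drawing and delete one edge from each of the $k$ crossings; the result is a planar graph on $n$ vertices with at least $e - k$ edges, so $e - k \le 3n$, giving the claimed linear bound. For $n \le 2$ the inequality is vacuous.

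Second, I would run Sz\'ekely's probabilistic argument to upgrade this to the desired inequality. Fix a drawing of $G$ realizing $cr(G)$ crossings, and let $p \in (0, 1]$ be a parameter to be chosen. Form a random induced subgraph $G'$ by keeping each vertex independently with probability $p$. Then
\begin{equation*}
    \mathbb{E}[n(G')] = pn, \qquad \mathbb{E}[e(G')] = p^2 e, \qquad \mathbb{E}[cr(G')] \le p^4 cr(G),
\end{equation*}
the last inequality because a crossing survives only if all four of its endpoint vertices are kept. Applying the linear bound $cr(G') \ge e(G') - 3 n(G')$ pointwise and taking expectations gives
\begin{equation*}
    p^4 cr(G) \ge p^2 e - 3 p n.
\end{equation*}
Dividing by $p^4$ yields $cr(G) \ge e/p^2 - 3n/p^3$.

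Finally, I would optimize in $p$. The natural choice is $p = c\, n/\sqrt{e\cdot cr(G)}$ with a small absolute constant $c$, provided this lies in $(0,1]$; substituting gives $cr(G) \gtrsim e^3/n^2$, which rearranges to $n \gtrsim e^{3/2}/cr(G)^{1/2}$. If instead the optimal $p$ would exceed $1$, that forces $n^2 \gtrsim e\cdot cr(G) \ge 0$, and in this regime we use $p=1$ directly in the linear bound to conclude $n \gtrsim e$ (since $cr(G) \ge 0$ and $e \le 3n$ then gives $e \lesssim n$). Combining the two regimes produces $n \gtrsim \min(e, e^{3/2}/cr(G)^{1/2})$, as required. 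The only delicate point is the regime split (choosing the right $p$, and handling the case $cr(G)=0$ by a direct appeal to $e \le 3n$), but this is a one-line optimization rather than a genuine obstacle.
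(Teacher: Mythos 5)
The paper does not give its own proof of this lemma; it simply refers to \cite{guth2016polynomial}. Your proposal is the standard Leighton/Ajtai--Chv\'atal--Newborn--Szemer\'edi argument with Sz\'ekely's probabilistic boost, which is exactly the proof presented there, and the overall structure is correct: Euler's formula gives the linear bound $cr(G)\ge e-3n$ (for a simple graph, which is what the paper's application produces), random vertex deletion yields $cr(G)\ge e/p^2 - 3n/p^3$ for every $p\in(0,1]$, and optimizing over $p$ produces the two regimes of the stated bound.

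One point needs correcting. The claimed optimal retention probability $p = c\,n/\sqrt{e\cdot cr(G)}$ is not the right formula, and substituting it into $cr(G)\ge e/p^2 - 3n/p^3$ does not produce $cr(G)\gtrsim e^3/n^2$ as asserted; a short check shows it only yields an inequality of the form $n^2\gtrsim e^2 - e^{3/2}cr(G)^{1/2}$, which can be vacuous. The clean way to close the optimization is to rearrange the sampling inequality as
\begin{equation*}
3n \;\ge\; pe - p^3\,cr(G),
\end{equation*}
and maximize the right-hand side over $p\in(0,1]$. The critical point is $p_* = \sqrt{e/(3\,cr(G))}$; if $p_*\le 1$ this gives $n\gtrsim e^{3/2}/cr(G)^{1/2}$, while if $p_*>1$ (i.e.\ $cr(G)< e/3$) the $p=1$ case of the linear bound gives $3n\ge e-cr(G)>2e/3$, hence $n\gtrsim e$. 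Equivalently, the more familiar choice $p=4n/e$ when $e\ge 4n$ gives $cr(G)\gtrsim e^3/n^2$ directly, and $e<4n$ gives $n\gtrsim e$. With that substitution fixed, your proof is complete and matches the cited source.
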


To prove Theorem \ref{fur}, we do several reductions. First, we can assume all the $\de$-tubes from $\T$ lie in $[0,1]^2$ and each tube forms an angle $\le \frac{1}{10}$ with $y$-axis. We also assume $\de^{-1}$ is an integer, so we can partition $[0,1]^2$ into lattice $\de$-squares, denoted by $[0,1]^2=\sqcup Q$. Here, each $Q$ is a square with length $\de$ and center $(\de(n-\frac{1}{2}),\de (m-\frac{1}{2}))$ for some $1\le m,n\le \de^{-1}$. We denote the set of these squares by $\cQ_\de$. Since it is not harmful to replace the $\de$-balls by $\de$-cubes, we may assume $\B\subset \cQ_\de$.

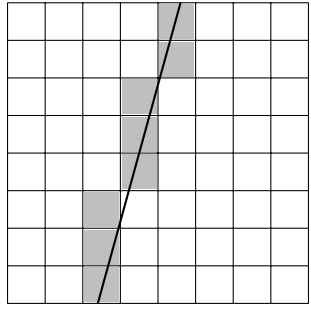
\begin{figure}
    \centering
    
\begin{tikzpicture}
\draw[step=0.5cm, black, thin] (0,0) grid (4,4);
\fill[gray!50] (1.02,0.02) rectangle (1.48,0.48);
\fill[gray!50] (1.02,0.52) rectangle (1.48,0.98);
\fill[gray!50] (1.02,1.02) rectangle (1.48,1.48);
\fill[gray!50] (1.52,1.52) rectangle (1.98,1.98);
\fill[gray!50] (1.52,2.02) rectangle (1.98,2.48);
\fill[gray!50] (1.52,2.52) rectangle (1.98,2.98);
\fill[gray!50] (2.02,3.02) rectangle (2.48,3.48);
\fill[gray!50] (2.02,3.52) rectangle (2.48,3.98);
\draw[black, thick] (1.2,0) -- (2.3,4);
\end{tikzpicture}

    \caption{Pseudo-tube}
    \label{sdtube}
\end{figure}

To make the proof clear, we need a substitution for tube, which we call \textit{pseudo-tube}. We give the definition of pseudo-tube. Given an $1\times \de$ tube $T$ which forms an angle $\le \frac{1}{10}$ with $y$-axis and lie in $[0,1]^2$,  we define its corresponding pseudo-tube $\td T$ as in Figure \ref{sdtube}.
Denote the core line of $T$ by $l$. The squares in $\cQ_\de$ form $\de^{-1}$ many rows. We see that $l$ intersect each row with at most two squares. For each row, if $l$ intersect this row with one square, we pick this square; if $l$ intersect this row with two squares, we pick the left square.
We define $\td T$ to be the union of these $\de^{-1}$ many squares we just picked. We call $\td T$ the corresponding pseudo-tube of $T$.

It's not hard to check that we can make the reduction so that the $\T$ in Theorem \ref{fur} is a set of pseudo-tubes and $Y(T)$ is a set of $\de$-squares contained in $\td T$. Without ambiguity, we still call pseudo-tube as tube and use $T$ instead of $\td T$.

\medskip

The next reduction is to guarantee some uniformity property among tubes. We set $Y'(T):=\{B\in\B:B\subset T\}$ (note that $Y(T)$ is a subset of $Y'(T)$). We label the squares in $Y'(T)$ one-by-one from bottom to top as $Y'(T)=\{Q_1,Q_2\cdots, Q_m\}$. Here $m=\#Y'(T)$ and the $y$-coordinates of $Q_i$ is less than that of $Q_{i+1}$. We define the distance between nearby squares as $d_i:=\dist(Q_i,Q_{i+1})$. Define the $d$-index set as $I_d:=\{i: d_i\sim d, 1\le i\le m-1\}$.

We claim that there exists a number $d\lesim \de^{\alpha}$
such that
\begin{equation}\label{ibig}
    |I_d|\gtrsim (\log\de^{-1})^{-1}d^{-1}.
\end{equation}
Recalling the condition of Theorem \ref{fur}, we have that each $Y(T)$ is a subset of $Y'(T)$ satisfying: $\#Y(T)\sim \de^{-\alpha}$ and each pair of nearby squares in $Y(T)$ have distance $\gtrsim \de^{\alpha}$. From this, we see that
\begin{equation}
    \sum_{d_i\lesim \de^{\alpha}}d_i\gtrsim 1.
\end{equation}
So, by pigeonhole principle we can find $d\lesim\de^{\alpha}$ such that
$$ 1\lesim \log(\de^{-1})\sum_{d_i\sim d}d_i\sim \log(\de^{-1}) d|I_d|, $$
which gives \eqref{ibig}.

For each $T\in\T$, there exists a $d_T\lesim \de^{\alpha}$ such that \eqref{ibig} holds for $d=d_T$. By dyadic pigeonholing, we choose a typical $d$ so that there is a set $\T'\subset \T$ such that $|\T'|\gtrsim (\log\de^{-1})^{-1}|\T|$ and $d_T=d$ for any $T\in\T'$. We denote $d=\de^\beta$, $\B'=\cup_{T\in\T'}Y'(T)$. Since $\B'\subset \B$ and $\alpha\le \beta$, we only need to prove:
\begin{equation}
    |\B'|\gtrsim (\log\de^{-1})^{3.5}\min(\de^{-\beta-1},\de^{-\frac{3}{2}\beta}(XW)^{\frac{1}{2}},\de^{-\beta}XW).
\end{equation}
If we abuse the notation and still write $\beta, \T',\B'$ as $\alpha,\T,\B$, we actually reduce Theorem \ref{fur} to the following problem.

\begin{theorem}\label{last}
Let $1 \le W \le X \le \delta^{-1}$. Let $\T$ be a collection of essentially distinct $\delta$-pseudo-tubes in $[0,1]^2$ that satisfies the following spacing condition: every $W^{-1}$-tube contains at most $\frac{X}{W}$ many tubes of $\T$, and the directions of these tubes are $\frac{1}{X}$-separated. We also assume $|\T|\gtrsim (\log\de^{-1})^{-1} XW$.

Let $\B=\{B_\de\}\subset \cQ_\de$ be a set of $\de$-squares and for each $T\in\T$ define $Y(T):=\{ B_{\de}\in \B: B_{\de}\subset T\}$. Suppose each $Y(T)$ satisfies \eqref{ibig} for $d=\de^\alpha$. Then we have the estimate
\begin{equation}
    |\B|\gtrsim (\log\de^{-1})^{3.5}\min(\de^{-\alpha-1},\de^{-\frac{3}{2}\alpha}(XW)^{\frac{1}{2}},\de^{-\alpha}XW).
\end{equation}
\end{theorem}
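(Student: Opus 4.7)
My plan is to apply the crossing number lemma (Lemma~\ref{crossing}) to a multigraph built from the tube configuration. Set $d = \delta^\alpha$ and define a graph $G$ on vertex set $V(G) = \B$; for each tube $T \in \T$ and each index $i \in I_d$, insert the edge $\{Q_i, Q_{i+1}\}$ between the consecutive squares of $Y'(T)$ at distance $\sim d$. By the reduction leading to Theorem~\ref{last}, each tube contributes $|I_d| \gtrsim d^{-1}/\log\delta^{-1}$ edges, and the cardinality bound $|\T| \gtrsim XW/\log\delta^{-1}$ yields a total edge count $e \gtrsim \delta^{-\alpha}XW/(\log\delta^{-1})^2$.

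Two geometric facts then control the other inputs to the lemma. First, two $\delta$-tubes with distinct directions cross in at most one point, so each pair of tubes contributes at most one pair of crossing edge-segments and $cr(G) \lesssim |\T|^2$. Second, the $\delta$-tubes through a given pair of balls at distance $d$ lie in an angular window of width $\sim \delta/d$; combined with the $1/X$-direction spacing of $\T$, this bounds the multi-edge multiplicity by $m \lesssim \delta X/d$. Applying the simple form $n \gtrsim \min(e, e^{3/2}/cr^{1/2})$ recovers two of the three terms: the ``sparse'' bound $n \gtrsim e \sim \delta^{-\alpha}XW$ (third term) and the Szemer\'edi--Trotter-style bound $n \gtrsim \delta^{-3\alpha/2}(XW)^{1/2}$ (second term). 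The multigraph extension $n \gtrsim e^{3/2}/\sqrt{m\cdot cr}$ delivers the first-term contribution; at $W = \delta^{-1}$ this plug-in gives exactly $\delta^{-\alpha-1}$, and combined with the first two estimates it captures the full three-way minimum.

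The main obstacle lies in the very dense regime $XW \gtrsim \delta^{-(2-\alpha)}$, where the extremal Case~1 example forces $|\B| \sim \delta^{-\alpha-1}$ and the multi-edge multiplicity becomes large: here the naive simple-graph Szemer\'edi--Trotter bound over-shoots the true value of $|\B|$, so one must carefully combine the simple and multigraph versions of the crossing lemma with both upper bounds $m \lesssim \delta X/d$ and $m \lesssim X/W$ (the latter applies when the angular window of width $\delta/d$ fits inside a single $W^{-1}$-tube, i.e.\ when $W \le d/\delta$). The correct choice of multiplicity estimate is what singles out the $\delta^{-\alpha-1}$ term in the three-way minimum. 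Finally, tracking the dyadic pigeonholing over the scale $d$ and the directions that was performed in the reduction to Theorem~\ref{last} accounts for the accumulated $(\log\delta^{-1})^{3.5}$ factor in the stated bound.
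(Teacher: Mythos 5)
Your skeleton (vertices at the ball centers, edges between consecutive squares of $Y'(T)$ at the pigeonholed scale $d\sim\de^{\alpha}$, and the crossing bound $cr(G)\lesssim|\T|^2$ from the fact that two tubes cross once) is the same as the paper's, but the way you handle edge multiplicity leaves a genuine gap. You count edges with multiplicity, $e\sim\de^{-\alpha}XW$ (each tube contributing its own $|I_d|$ edges), and then apply the \emph{simple-graph} form $n\gtrsim\min(e,e^{3/2}/cr(G)^{1/2})$ to extract the second and third terms; the crossing number inequality is false for multigraphs without a multiplicity correction, so this step is unjustified. If instead you use the corrected form $n\gtrsim\min\big(e/m,\,e^{3/2}/(m\,cr(G))^{1/2}\big)$ with your worst-case multiplicity bounds $m\lesssim\min(\de X/d,\,X/W)$, the conclusion is quantitatively too weak: already in the model case $W=1$, $X=\de^{-1}$ (Question 1) one has $m$ as large as $\de^{-\alpha}$, $e\sim\de^{-\alpha-1}$, $cr(G)\lesssim\de^{-2}$, and the bound yields only $|\B|\gtrsim\de^{-\alpha-1/2}$, short of the required $\de^{-\frac{3}{2}\alpha-\frac12}$ by exactly the factor $m^{1/2}$; similarly at $X=\de^{-1}$, $W=\de^{-1+\alpha}$ one gets $\de^{-1-\alpha/2}$ instead of the required $\de^{-\alpha-1}$. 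Your observation that the multigraph bound gives $\de^{-\alpha-1}$ is correct only at the single extreme point $W=X=\de^{-1}$. (Also, $m\lesssim\de X/d$ as stated requires $W\lesssim d/\de$; otherwise an extra factor $\sim W\de/d$ counting the relevant $W^{-1}$-tubes appears.)

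What is missing are the two ideas the paper uses to beat worst-case multiplicity. First, it works with the simple graph of \emph{distinct} edges and lower-bounds $|E|=\sum_{T}\sum_{e\subset T}1/n_e$ via Cauchy--Schwarz: for a fixed $T_0$ it shows $\sum_{e\subset T_0,\ \mathrm{length}(e)\sim\de^{\alpha}}n_e\lesssim\de^{-\alpha}$ by a dyadic decomposition in the angle $\mu=\angle(T_0,T)$, using that $\#\{T:\angle(T_0,T)\sim\mu\}\lesssim\mu^{2}XW$ and each such $T$ carries $\lesssim\mu^{-1}\de^{1-\alpha}$ of these edges; this crucially uses the hypothesis $XW\lesssim\de^{-2+\alpha}$, and in that regime gives $|E|\gtrsim(\log\de^{-1})^{-2}\de^{-\alpha}|\T|$, after which the simple crossing lemma legitimately produces the terms $\de^{-\alpha}XW$ and $\de^{-\frac{3}{2}\alpha}(XW)^{1/2}$. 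Second, the term $\de^{-\alpha-1}$ does not come from any crossing estimate in the dense regime: when $XW\gtrsim\de^{-2+\alpha}$ the paper prunes $\T$ (using the grid structure of the dual balls) to a subfamily satisfying the spacing condition with $X'W'\sim\de^{-2+\alpha}$ and applies the sparse case, obtaining $\min(\de^{-2},\de^{-\alpha-1})=\de^{-\alpha-1}$. Without an average-multiplicity argument of the first kind and a pruning step of the second kind, your scheme does not reach the stated three-term minimum.
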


\begin{proof}[Proof of Theorem \ref{last}]
We construct a graph $G=(V,E)$ in the following way. Let the vertices $V$ be the centers of squares in $\B$. For a $T\in\T$, consider all the pairs of nearby squares in $Y(T)$. We link the centers of  each nearby squares with an edge. Let $E$ be the edges formed in this way for all $T\in\T$.

Each pair of tubes contribute at most one crossing (but they may share a lot of edges), so we have
\begin{equation}
    cr(G)\le |\T|^2.
\end{equation}
On the other hand by Lemma \ref{crossing} we have
\begin{equation}
     |\B|\gtrsim \min (|E|,\frac{|E|^{3/2}}{cr(G)^{1/2}}).
\end{equation}
So, we have
\begin{equation}
     |\B|\gtrsim \min (|E|,\frac{|E|^{3/2}}{|\T|}).
\end{equation}

We will discuss two cases. We remind the readers that $(\log\de^{-1})^{-1}XW\lesim |\T|\lesim XW$.

\textit{Case 1}: $XW\lesim \de^{-2+\alpha}$.

We prove that $|E|\gtrsim (\log\de^{-1})^{-2}\de^{-\alpha}|\T|$, so as a result we obtain
\begin{equation}\label{last1}
    |\B|\gtrsim (\log\de^{-1})^{-3.5} \min(\de^{-\alpha} XW,\de^{-\frac{3}{2}\alpha}(XW)^{1/2}).
\end{equation} 
For each edge $e\in E$, define $n_e$ to be the number of tubes $T\in\T$ that contain $e$. We have 
$$|E|=\sum_{e\in E}\ \sum_{T\in\T, e\subset T}\frac{1}{n_e}=\sum_{T\in \T}\ \sum_{e\in E, e\subset T}\frac{1}{n_e}.$$

It suffices to show for any fixed $T_0$,
$$ \sum_{e\in E, e\subset T_0}\frac{1}{n_e}\gtrsim (\log\de^{-1})^{-2}\de^{-\alpha}. $$
Recalling the condition for $Y(T)$ in Theorem \ref{last} and \eqref{ibig}, we have
$$ \#\{ e\subset T: \textup{length}(e)\sim \de^{\alpha} \} \gtrsim (\log\de^{-1})^{-1}\de^{-\alpha}. $$
So by Cauchy-Schwartz inequality, we have
$$ \sum_{e\in E, e\subset T_0}\frac{1}{n_e}\ge \sum_{e\in E, e\subset T_0, \textup{length}(e)\sim\de^{\alpha}}\frac{1}{n_e}\ge \frac{(\log\de^{-1})^{-2}\de^{-2\alpha}}{\sum_{e\in E, e\subset T_0,\textup{length}(e)\sim\de^{\alpha}}n_e}.$$

It suffices to prove
\begin{equation}\label{lastt2}
    \sum_{e\in E, e\subset T_0,\textup{length}(e)\sim\de^{\alpha}}n_e\lesim \de^{-\alpha}. 
\end{equation} 
For $e\in E, T\in\T$, we define $\chi(e,T)=1$ if $e\subset T$ and $=0$ otherwise.
We rewrite the left hand side above as
\begin{equation}\label{lastt1}
    \sum_{e\subset T_0, \textup{length}(e)\sim\de^{\alpha}}\sum_{T\in\T}\chi(e,T).
\end{equation}
Note that if $\chi(e,T)=1$ for some $e\subset T_0$ satisfying $\textup{length}(e)\sim \de^{\alpha}$, then the angle between $T_0$ and $T$ is less than $\de^{1-\alpha}$. We will analyze $T$ according to the angle $\mu=\angle(T_0,T)$. It's easy to see that those $T$ that form an angle $\sim\mu$ with $T_0$ lie in a $1\times \mu$ fat tube, and by the spacing condition of $\T$ we have
$$\#\{T:\angle(T_0,T)\sim\mu\}\lesim \mu^{2} XW\lesim \mu^{2}\de^{-2+\alpha}.$$
In the last inequality we use the assumption $XW\lesim \de^{-2+\alpha}$.

We further rewrite \eqref{lastt1}  as:
$$\sum_{\mu\lesim \de^{1-\alpha}}\ \sum_{T:\angle(T_0,T)\sim\mu}\ \sum_{e\subset T_0, \textup{length}(e)\sim\de^{\alpha}}\chi(e,T).$$
Note that when $\angle(T_0,T)\sim\mu$, we have $\sum_{e\subset T_0, \textup{length}(e)\sim\de^{\alpha}}\chi(e,T)\le 
\frac{\textup{length}(T_0\cap T)}{\de^\alpha}\lesim \mu^{-1}\de^{1-\alpha}$,
so the inequality above is less than
$$\sum_{\mu\lesim \de^{1-\alpha}}\ \sum_{T:\angle(T_0,T)\sim\mu}\mu^{-1}\de^{1-\alpha}\lesim \sum_{\mu\lesim \de^{1-\alpha}} \mu^{2}\de^{-2+\alpha}\mu^{-1}\de^{1-\alpha}\lesim \de^{-\alpha}.$$
This finishes the proof of \eqref{lastt2}.

\textit{Case 2}: $XW\gtrsim \de^{-2+\alpha}$.
In this case, we choose another pair $(X',W')$ so that $X'\le X, W'\le W$, $1\le W'\le X'\le \de^{-1}$ and $X'W'\sim \de^{-2+\alpha}$. We through away some tubes from $\T$ so that it satisfies the spacing condition for new parameters $(X',W')$. This is easily seen from the dual picture as in Figure \ref{fig:generalcase}. Originally, the balls are evenly spaced in the $W^{-1}\times X^{-1}$-grid. We throw away some balls so that it fits into the $W'^{-1}\times X'^{-1}$-grid. We apply Case 1 to the new parameter $(X',W')$ to obtain
\begin{equation}\label{last2}
    |\B|\gtrsim (\log\de^{-1})^{-3.5} \min(\de^{-\alpha} X'W',\de^{-\frac{3}{2}\alpha}(X'W')^{1/2})=(\log\de^{-1})^{-3.5} \min(\de^{-2},\de^{-\alpha-1}).
\end{equation} 
Combining \eqref{last1} and \eqref{last2}, we obtain the desired estimate
\[
    |\B|\gtrsim (\log\de^{-1})^{-3.5} \min(\de^{-\alpha-1},\de^{-\alpha} XW,\de^{-\frac{3}{2}\alpha}(XW)^{1/2}).
\]

\end{proof}


\bibliographystyle{abbrv}

\end{document}